\newtheorem{theorem}{Theorem}[section]
\newtheorem*{theorem*}{Theorem}
\newtheorem{proposition}[theorem]{Proposition}
\newtheorem{corollary}[theorem]{Corollary}
\newtheorem{lemma}[theorem]{Lemma}
\theoremstyle{definition}
\newtheorem{example}[theorem]{Example}
\def\sym#1{\mathrm{Sym}(#1)}
\def\c#1{\mathrm{con}_{#1}}
\def\aut#1{\mathrm{Aut}(#1)}
\def\aff#1{\mathrm{Aff}#1}
\def\lmlt{\mathrm{LMlt}}
\def\dis{\mathrm{Dis}}
\def\Q{\mathcal{Q}}
\def\N{\mathrm{Norm}}
\newcommand{\Con}{\mathrm{Con}}
\def\setof#1#2{\{#1\, : \,#2\}}
\newcommand*\xbar[1]{%
	\hbox{%
		\vbox{%
			\hrule height 0.5pt 
			\kern0.5ex
			\hbox{%
				\kern-0.1em
				\ensuremath{#1}%
				\kern-0.1em
			}%
		}%
	}%
}
\def\ldiv{\backslash}
\def\!#1{#1^\s}
\def\s{\mathfrak{s}}
\makeindex \setlength{\evensidemargin}{-0.04cm}
\title{Mal'cev classes of left quasigroups and Quandles}
\author{M. Bonatto}
\address[M. Bonatto]{Dipartimento di matematica e informatica - UNIFE}
\email{marco.bonatto.87@gmail.com}
\author{S. Fioravanti}
\address[S. Fioravanti]{Institute for Algebra - JKU Linz}
\email{stefano.fioravanti.66@gmail.com}
\begin{document}
	\maketitle

\section*{Abstract}
In this paper we investigate some Mal'cev classes of varieties of left quasigroups. We prove that the weakest non-trivial Mal'cev condition for a variety of left quasigroups is having a Mal'cev term and that every congruence meet-semidistributive variety of left quasigroups is congruence arithmetic. Then we specialize to the setting of quandles for which we prove that the congruence distributive varieties are those which has  finite models.

\section{Introduction}

Starting from Mal'cev's description of congruence permutability as in \cite{Malt}, the problem of characterizing properties of classes of varieties as {\it Mal'cev conditions} has led to several results. Mal'cev conditions turned out to be extremely useful, for instance to capture lattice theoretical properties of the congruence lattices of the algebras of classes of variety. In \cite{Pix.DAPO} A. Pixley found a strong Mal'cev condition defining the class of varieties with distributive and permuting congruences. In \cite{Jon.AWCL} B. J\'{o}nsson shows a Mal'cev condition characterizing congruence distributivity, in \cite{Day.ACOM} A. Day shows a Mal'cev condition characterizing the class of varieties with modular congruence lattices.

These results are examples of a more general theorem obtained independently by Pixley \cite{Pix.LMC} and R. Wille \cite{Wil.K} that can be considered as a foundational result in the field. They proved that if $p \leq q$ is a lattice identity, then the class of varieties whose congruence lattices satisfy $p \leq q$ is the intersection of countably many Mal'cev classes. \cite{Pix.LMC} and \cite{Wil.K} include an algorithm to generate Mal'cev conditions associated with congruence identities.

Furthermore, the class of varieties satisfying a non-trivial idempotent Mal'cev condition (i.e. any  idempotent Mal'cev condition which is not satisfied by any projection algebra) is known to be a Mal'cev class \cite{Tay}. Such class of varieties were characterized by the existence of a {\it Taylor} term, namely an idempotent $n$-ary term $t$ that for every coordinate $i\leq n$ satisfies an identity as 
$$t(x_1,\ldots,x_n)\approx t(y_1,	\ldots, y_n)$$
where $x_1,\ldots,x_n,y_1,\ldots,y_n\in \{x,y\}$, $x_i=x$ and $y_i=y$.

Recently this class of varieties was proven to be a strong Mal'cev class \cite{Miro}, i.e. there exists the weakest strong idempotent Mal'cev condition.

A variety $\mathcal{V}$ is {\it meet-semidistributive} if 
the implication
$$\alpha\wedge \beta=\alpha\wedge\gamma\,\Longrightarrow\, \alpha\wedge\beta=\alpha\wedge (\beta \vee\gamma),$$
holds for every triple of congruences of any algebra in $\mathcal{V}$. It is still unknown if the class of meet-semidistributivity varieties is defined by a strong Mal'cev condition, nevertheless it can be characterized in several different ways \cite{Miro2}. On the other hand, we are going to use the characterization of meet-semidistributive varieties in terms of {\it commutator of congruences} as defined in \cite{comm}. 
\begin{theorem}\cite[Theorem 8.1 items (1), (3), (4)]{shape}\label{KK}
	Let $\mathcal{V}$ be a variety. The following are equivalent:
	\begin{itemize}
		\item[(i)] $\mathcal{V}$ is a congruence meet-semidistributive variety.
		\item[(ii)] No member of $\mathcal{V}$ has a non-trivial abelian congruence.
		\item[(iii)] $[\alpha,\beta] = \alpha\wedge \beta$ for every $\alpha,\beta \in \Con(A)$ and every $A \in \mathcal{V}$.
		%
	\end{itemize}
	%
\end{theorem}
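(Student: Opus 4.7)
The plan is to prove the cycle $(iii) \Rightarrow (ii) \Rightarrow (i) \Rightarrow (iii)$, relying throughout on the basic facts about the commutator $[\cdot,\cdot]$: it is monotone in each argument, it satisfies $[\alpha,\beta]\leq \alpha\wedge\beta$, and a congruence $\alpha$ is abelian precisely when $[\alpha,\alpha]=0$.

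The implication $(iii) \Rightarrow (ii)$ is immediate: if $\alpha$ is abelian, then instantiating (iii) at $\beta=\alpha$ gives $\alpha = \alpha\wedge\alpha = [\alpha,\alpha]=0$, so $\alpha$ is trivial. For $(i) \Rightarrow (iii)$, since $[\alpha,\beta]\leq \alpha\wedge\beta$ always holds, I only need the reverse inequality. I would argue by contradiction: suppose some $A\in \mathcal V$ has congruences with $[\alpha,\beta]<\alpha\wedge\beta$. Passing to $A/[\alpha,\beta]$, we may assume $[\alpha,\beta]=0$ and $\alpha\wedge\beta\neq 0$. Using the term condition formulation of a vanishing commutator, the pair $(\alpha,\beta)$ admits ``matrix'' identifications reminiscent of a module action, and from these one can build three congruences inside $[0,\alpha\vee\beta]$ that witness a failure of meet-semidistributivity (typically by producing two distinct complements of a block of $\alpha\wedge\beta$). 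The work here is essentially bookkeeping with the term condition.

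The hard part is $(ii) \Rightarrow (i)$. Assuming no algebra in $\mathcal V$ has a non-trivial abelian congruence, I suppose for contradiction that some $A\in\mathcal V$ has $\alpha,\beta,\gamma$ with $\alpha\wedge\beta = \alpha\wedge\gamma < \alpha\wedge(\beta\vee\gamma)$. Quotienting by $\alpha\wedge\beta$, I may assume $\alpha\wedge\beta = \alpha\wedge\gamma = 0$. The goal is to exhibit a non-trivial abelian congruence inside $A$ (or in a related algebra of $\mathcal{V}$), contradicting (ii). The natural candidate is $\delta := \alpha\wedge(\beta\vee\gamma)$; one would like to show $[\delta,\delta]=0$. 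The intuition is that $\delta$ is captured by $\alpha$ and simultaneously sits in the join $\beta\vee\gamma$, while $[\alpha,\beta]\leq \alpha\wedge\beta=0$ and $[\alpha,\gamma]\leq\alpha\wedge\gamma=0$; a careful additivity/monotonicity argument for the commutator together with the term condition should then force $[\delta,\delta]=0$.

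The main obstacle is precisely this last step: the commutator is not in general fully additive in the join, so one cannot simply write $[\delta,\delta]\leq [\alpha,\beta\vee\gamma]\leq [\alpha,\beta]\vee[\alpha,\gamma]=0$ without justification. The cleanest route is to reduce to the finite case and invoke tame congruence theory: a failure of meet-semidistributivity in a finite algebra forces a prime quotient of abelian type (type $\mathbf{1}$ or $\mathbf{2}$), and such a cover supports a non-trivial abelian congruence. One lifts this to $\mathcal V$ using a local-finiteness/compactness argument on the finitely generated subalgebra of $A$ carrying the triple $(\alpha,\beta,\gamma)$, i.e.\ by passing to a finite quotient that preserves the semidistributivity failure. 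Executing this reduction, and extracting the abelian congruence from the type-theoretic data in a way that survives the lift back to $\mathcal V$, is the delicate technical point of the proof.
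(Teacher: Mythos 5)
First, a remark on context: the paper does not prove this statement at all --- it is imported verbatim from Kearnes--Kiss \cite{shape} --- so there is no in-paper argument to compare against and your attempt must stand on its own. The easy pieces are fine or repairable. Your (iii) $\Rightarrow$ (ii) is correct. Your (i) $\Rightarrow$ (iii) is vaguer than it needs to be: since $[\alpha\wedge\beta,\alpha\wedge\beta]\leq[\alpha,\beta]$ and $C(\alpha,\beta;[\alpha,\beta])$ descends to the quotient, the congruence $(\alpha\wedge\beta)/[\alpha,\beta]$ of $A/[\alpha,\beta]$ is abelian, so (i) $\Rightarrow$ (iii) reduces to (i) $\Rightarrow$ (ii); the contrapositive of the latter is the standard construction where, from a non-trivial abelian $\theta$ on $A$, one forms $A(\theta)=\{(a,b):a\,\theta\,b\}$ with projection kernels $\eta_0,\eta_1$ and the congruence $\Delta$ generated by the pairs $((a,a),(b,b))$ with $a\,\theta\,b$; abelianness gives $\Delta\wedge\eta_0=\Delta\wedge\eta_1=0$ while $\Delta\leq\eta_0\vee\eta_1$ and $\Delta\neq 0$, so SD($\wedge$) fails. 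That is the precise content behind your ``two distinct complements of a block'' remark, and it needs to be spelled out rather than gestured at.

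The genuine gap is (ii) $\Rightarrow$ (i). You correctly observe that one cannot write $[\delta,\delta]\leq[\alpha,\beta]\vee[\alpha,\gamma]$ --- join-additivity of the term-condition commutator fails outside congruence modular varieties, and establishing a usable substitute is essentially the whole difficulty --- but your fallback of reducing to a finite algebra and invoking tame congruence theory only makes sense for locally finite varieties. The theorem is stated, and is applied in this paper (e.g.\ in Theorem \ref{main th 2}), to arbitrary varieties of left quasigroups, which need not be locally finite. There is no ``finite quotient preserving the semidistributivity failure'' in general: a finitely generated subalgebra need not be finite, restriction to subalgebras does not preserve joins of congruences or the failure of SD($\wedge$), and TCT types are simply undefined in this setting. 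This direction is exactly the deep content of Kearnes--Kiss Theorem 8.1 (their argument proceeds through rectangulation and a delicate analysis of the centralizer relations, occupying a substantial part of the memoir) and cannot be recovered by the reduction you propose; as written, your argument would at best yield the locally finite case already known from Hobby--McKenzie.
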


An algebra $A$ is said to be: 
\begin{itemize}
	\item[(i)] {\it coherent} if every subalgebra of $A$ which contains a block of a congruence $\alpha\in \Con(A)$ is a union of blocks of $\alpha$. 
	\item[(ii)] {\it Congruence regular} if whenever $[a]_\alpha=[a]_\beta$ for some $a\in A$ and $\alpha,\beta\in \Con(A)$ then $\alpha=\beta$.
	\item[(iii)] {\it Congruence uniform} if the blocks of every congruence $\alpha\in \Con(A)$ have all the same cardinality.
\end{itemize}
A variety $\mathcal{V}$ is {\it coherent} (resp. {\it congruence uniform}, {\it congruence regular}) if all the algebras in $\mathcal{V}$ are coherent (resp. congruence uniform, congruence regular). Because for varieties regularity is equivalent to the condition that no non-zero congruence has a singleton congruence class, every congruence uniform variety is congruence regular. Congruence regularity and coherency are weak Mal'cev classes (see \cite{regular} and \cite{coherent}). On the other hand, it is known that congruence uniformity is not defined by a Mal'cev condition \cite{uniform}. 

Some of the most studied Mal'cev classes of varieties are displayed in Figure \ref{Maltsev conditions}. We refer the reader to \cite{UA} 
for further informations about such classes and to \cite{Bod} for a more exhaustive poset of Mal'cev classes.
\begin{figure}[!ht]     
	$ \xymatrixrowsep{0.12in}
	\xymatrixcolsep{0.30in}
	\xymatrix{
		& \text{T} \ar@{-}[d]   &   \\
		& \text{wDF} \ar@{-}[dr] \ar@{-}[d]&  &    \\
		& \text{DF} \ar@{-}[dr]\ar@{-}[dl] &    CE\ar@{-}[d] \ar@{-}[ddll]&\\
		\text{SD($\wedge$)} \ar@{-}[d]  & &   \text{CM} \ar@{-}[ddll]\ar@{-}[dd]   \\
		\text{SD($\vee$)} \ar@{-}[d]    & &   \\
		\text{CD}\ar@{-}[ddd] \ar@{-}[dr]  & & \text{Ed} \ar@{-}[d]\ar@{-}[dl]  \\
		&  \text{NU}   &  \text{CP}  \ar@{-}[dl]  \\
		& \text{M}\ar@{-}[dl] \ar@{-}[dr]&      \\
		\text{CA} &   & \text{CO} &   
	}
	$
	\caption{Mal'cev classes: T = Taylor term, wDF = weak difference term, CE = non trivial congruence equation, DF = difference term, CM = congruence modularity, Ed = edge term, CP = congruence permutability, M = Mal'cev term, CO = congruence coherency, SD($\wedge$) = meet semidistributivity, SD($\vee$) = join semidistributivity, CD = congruence distributivity, NU = CD $\bigcap$ Ed = near unanimity term, CA = CD $\bigcap$ M = congruence arithmeticity.}
	\label{Maltsev conditions} 
	
\end{figure}

The main goal of this paper is to investigate Mal'cev conditions for racks and quandles. In particular, this paper is concerned with certain Mal'cev classes of varieties, namely, the varieties having a Taylor term, a Mal'cev term and meet semi-distributive congruence lattices.

Left quasigroups  are rather combinatorial objects, nevertheless Mal'cev classes of varieties of left quasigroups behave in a pretty rigid way. A characterization of Mal'cev varieties of left quasigroups is provided in Theorem \ref{main th}: they are the varieties for which every left quasigroup is connected, (a left quasigroup is connected if the action of its left multiplication group is transitive). Moreover, we show that several Mal'cev conditions are equivalent for varieties of left quasigroups. In particular, all the classes in the interval between the class of Taylor varieties and the class of coherent varieties in Figure \ref{Maltsev conditions} collapse into the strong Mal'cev class of varieties with a Mal'cev term. Moreover, we prove that the weakest non-trivial (not necessarily idempotent) Mal'cev condition for left quasigroups is having a Mal'cev term, and all such varieties are congruence uniform. In Corollary \ref{teo Taylor} we characterize finite Mal'cev idempotent left quasigroups as the superconnected idempotent left quasigroups (i.e. left quasigroups such that all the subalgebras are connected) using a general result given in \cite{SB}.

In Theorem \ref{main th 2} we show that a congruence meet-semidistributive variety of left quasigroups are congruence arithmetic.

As a consequence of our two main theorems, the poset of Mal'cev classes of left quasigroups in figure \ref{Maltsev conditions} turns into the one in Figure \ref{Maltsev conditions for left quasigroup}.

\begin{figure}[!ht]
	$\xymatrixrowsep{0.15in}
	\xymatrixcolsep{0.15in}
	\xymatrix{ 
		& \text{T = CO = M} \ar@{-}[d] & \\
		&\text{NU = SD($\wedge$)}=\text{ CA}	
	}
	$ 
	\caption{Mal'cev classes of varieties of left quasigroups.}
	\label{Maltsev conditions for left quasigroup}  
\end{figure}

Then we turn our attention to quandles, i.e. idempotent left distributive left quasigroups. Quandles are of interest since they provide knot invariants \cite{J,Matveev}. The class of quandles used for such topological applications is the class of connected quandles. According to the characterization of Mal'cev varieties of left quasigroups, connectedness is actually a relevant property also algebraically. Some of the contents of the paper are formulated for {\it semimedial} left quasigroups, a class that contain racks and medial left quasigroups \cite{semimedial}.

A characterization of distributive varieties of semimedial left quasigroup is given by the properties of the displacement group in Theorem \ref{distributive semimedial} where we take advantage of the adaptation of the commutator theory in the sense of \cite{comm} developed first for racks in \cite{CP} and then extended to semimedial left quasigroups in \cite{semimedial}.

In Theorem \ref{abel iff finite} we prove that a variety of quandles is distributive if and only if it has no finite models, making use of the characterization of strictly simple and simple abelian quandles \cite{Principal}. We also prove that there is no distributive variety of involutory quandles. The problem of finding an example of non-trivial distributive variety of quandles (resp. left quasigroups) is still open.

Examples of non-trivial Mal'cev varieties of quandles (which members are not just left quasigroup reducts of quasigroups) are provided in Table \ref{non trivial ex}.

\subsection*{Notation and terminology} 

We refer to \cite{UA} for basic concepts of universal algebra.
Let $A$ be an algebra and $t$ be an $n$-ary term. Then we say that $A$ satisfies the identity $t_1(x_1,\ldots, x_n)\approx t_2(x_1,\ldots,x_n)$ if $t_1(a_1,\ldots, a_n)= t_2(a_1,\ldots,a_n)$ for every $a_i\in A$.  

We denote by $\textbf{H}(A)$, $\textbf{S}(A)$ and $\textbf{P}(A)$ respectively the set of homomorphic images, subalgebras and powers of the algebra $A$ and $\mathcal{V}(\mathcal{K})$ denotes the variety generated by the class of algebras $\mathcal{K}$.
We denote by $\Con(A)$ the congruence lattice of $A$, the block of $a\in A$ with respect to a congruence $\alpha$ is denoted by $[a]_\alpha$ (or simply by $[a]$) and the factor algebra by $A/\alpha$. We denote by $1_A=A\times A$ and $0_A=\setof{(a,a)}{a\in A}$ respectively the top and bottom element in the congruence lattice of $A$

Through all the paper, concrete examples of left quasigroups are computed using the software Mace4 \cite{Prover9} and examples of quandles are taken from the library of connected quandles of GAP  \cite{RIG}.

\subsection*{Acknowledgements}
The first author would like to thank Professor David Stanovsk\'y for introducing him to the topic and Professor Paolo Aglian\`o for the useful discussions and remarks about the present paper.

\section{Left quasigroups}

A left quasigroup is a binary algebraic structure $(Q,*,\backslash)$ such that the following
identities hold:
$$x*(x\backslash y)\approx y \approx x\backslash (x*y).$$
Hence, a left quasigroup is a set $Q$ endowed with a binary operation $\ast$  such that the mapping $L_x \colon y \mapsto x \ast y$ is a bijection of $Q$ for every $x \in Q$. The right multiplication mappings $R_x \colon  y \mapsto y \ast x$ need not to be bijections. Clearly the left division is defined by $x/y = L^{-1}_x (y)$, so we usually denote left quasigroups just as a pair $(Q, \ast)$. Nevertheless, if $(Q, \ast)$ is a left quasigroup and $(R, \ast)$ is a binary algebraic structure and $f \colon Q \rightarrow R$ is a homomorphism with respect to $\ast$, the image
of $f$ is not necessarily a left quasigroup. 
We define the {\it left multiplication group of $Q$} as $\lmlt(Q)=\langle L_a,\, a\in Q\rangle$.

Let $\alpha$ be a congruence of a left quasigroup $Q$. The map
$$\lmlt(Q)\longrightarrow \lmlt(Q/\alpha),\quad L_a\mapsto L_{[a]}$$
can be extended to a surjective morphism of groups with kernel denoted by $\lmlt^\alpha$. The {\it displacement group relative to $\alpha$}, denoted by $\dis_\alpha$, is the normal closure in $\lmlt(Q)$ of $\setof{L_a L_b^{-1}}{a\,\alpha\, b}$. In particular, we denote by $\dis(Q)$ the displacement group relative to $1_Q$ and we simpy call it the {\it displacement group of $Q$}. The map defined above clearly restrict and corestrict to the displacement groups of $Q$ and $Q/\alpha$ and we denote by $\dis^\alpha$ the intersection between $\lmlt^\alpha$ and $\dis(Q)$.

\begin{lemma}\label{dis of HSP}
	Let $\mathcal{K}$ be a class of left quasigroups
	and $Q\in \mathcal{V}(\mathcal{K})$. 
	Then:
	\begin{itemize}
		\item[(i)] $\dis(Q)\in \mathcal{V}(\setof{\dis(R)}{R\in \mathcal{K}})$.
		\item[(ii)] $\lmlt(Q)\in \mathcal{V}(\setof{\lmlt(R)}{R\in \mathcal{K}})$. 
	\end{itemize}
\end{lemma}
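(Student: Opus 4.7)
The plan is to reduce to Birkhoff's HSP theorem: writing $Q$ as a homomorphic image of a subalgebra of a product $P=\prod_{i\in I} R_i$ with $R_i\in\mathcal{K}$, and then checking that each of the operators $\lmlt$ and $\dis$ behaves well with respect to the class operators $\mathbf{H}$, $\mathbf{S}$, $\mathbf{P}$. Concretely, I would prove
\begin{itemize}
\item[(P)] $\lmlt(\prod_i R_i)\in\mathbf{SP}(\{\lmlt(R_i)\})$ and $\dis(\prod_i R_i)\in\mathbf{SP}(\{\dis(R_i)\})$,
\item[(S)] if $T\le Q$ then $\lmlt(T)\in\mathbf{HS}(\lmlt(Q))$ and $\dis(T)\in\mathbf{HS}(\dis(Q))$,
\item[(H)] if $\pi\colon Q\twoheadrightarrow Q'$ is a surjective morphism then $\lmlt(Q')\in\mathbf{H}(\lmlt(Q))$ and $\dis(Q')\in\mathbf{H}(\dis(Q))$.
\end{itemize}
Chaining these three facts then gives $\lmlt(Q),\dis(Q)\in\mathbf{HSP}(\{\lmlt(R_i)\})$ and $\mathbf{HSP}(\{\dis(R_i)\})$ respectively, which is the desired conclusion.

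Step (P) is straightforward: for $a=(a_i)_i\in\prod_i R_i$ one has $L_a=(L_{a_i})_i$ acting componentwise, so $\lmlt(\prod_i R_i)$ embeds in the full direct product $\prod_i\lmlt(R_i)$. Since the generators $L_aL_b^{-1}$ of $\dis(\prod_i R_i)$ lie in the normal subgroup $\prod_i\dis(R_i)$ of $\prod_i\lmlt(R_i)$, the normal closure inside $\lmlt(\prod_i R_i)$ is also contained there, giving the analogous embedding for $\dis$.

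For step (H) the paper already observes that $L_a\mapsto L_{[a]_\alpha}$ extends to a surjective group homomorphism $\lmlt(Q)\twoheadrightarrow\lmlt(Q/\alpha)$. Since such a homomorphism sends the normal closure of a set to the normal closure of its image, it restricts to a surjection $\dis(Q)\twoheadrightarrow\dis(Q/\alpha)$.

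Step (S) is the only delicate point, because $\lmlt(T)$ is not literally a subgroup of $\lmlt(Q)$. What is true is that $H:=\langle L_a:a\in T\rangle\le\lmlt(Q)$ preserves $T$ setwise (using that $T$ is closed under both $*$ and $\backslash$), and that the restriction map $H\to\lmlt(T)$, $g\mapsto g|_T$, is a well-defined surjective group homomorphism. Hence $\lmlt(T)\in\mathbf{H}(H)\subseteq\mathbf{HS}(\lmlt(Q))$. For the displacement group one repeats the argument with the subgroup $D_T\le H$ defined as the normal closure in $H$ of $\{L_aL_b^{-1}:a,b\in T\}$; since this set lies in $\dis(Q)$ and $\dis(Q)$ is normal in $\lmlt(Q)$, we have $D_T\le\dis(Q)$, and the restriction $D_T\to\lmlt(T)$ has image a normal subgroup of $\lmlt(T)$ containing every $L_a|_TL_b|_T^{-1}$, hence equal to $\dis(T)$. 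The expected main obstacle is precisely this verification that the normal-closure definition of $\dis$ interacts correctly with the non-canonical restriction map in the subalgebra step; once that is settled, combining (P), (S), (H) along an $\mathbf{HSP}$-decomposition of $Q$ completes both parts of the lemma.
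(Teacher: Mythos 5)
Your proposal is correct and follows essentially the same route as the paper: both arguments run through Birkhoff's $\mathbf{HSP}$ decomposition and verify that $\lmlt$ and $\dis$ behave well under quotients (via the induced surjection $L_a\mapsto L_{[a]}$), under subalgebras (via the subgroup $H=\langle L_a : a\in T\rangle$ and the restriction map), and under products (via the componentwise embedding into $\prod_i\lmlt(R_i)$, resp.\ $\prod_i\dis(R_i)$). Your treatment of the subalgebra step for $\dis$ is in fact slightly more explicit than the paper's, which simply records $\dis(S)\cong\langle hL_aL_b^{-1}h^{-1}|_S : a,b\in S,\ h\in H\rangle\in\mathbf{HS}(\dis(Q_i))$.
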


\begin{proof}
	(i) Let $\setof{Q_i}{i\in I}\subseteq \mathcal{K}$. The group $\dis(Q_i/\alpha)\in \textbf{H}(\dis(Q_i))$. Let $S$ be a subalgebra of $Q_i$ and $H=\langle L_a,\, a\in S\rangle$. Then
	$$\dis(S)\cong \langle h L_a L_b^{-1} h^{-1}|_S,\, a,b\in S,\, h\in H\rangle\in \textbf{HS}(\dis(Q_i)).$$
	Let $Q=\prod_{i\in I} Q_i$ and $\alpha_i$ the kernel of the canonical homomorphism onto $Q_i$. Then $\bigcap_{i\in I} \dis^{\alpha_i}=1$ and so we have a canonical embedding 
	\begin{equation*}
		\dis(Q)\hookrightarrow\prod_{i\in I} \dis(Q)/\dis^{\alpha_i}=\prod_{i\in I} \dis(Q_i),
	\end{equation*}
	i.e. $\dis(Q)\in \textbf{SP}(\setof{\dis(Q_i)}{i\in I})$. The same argument can be used for (ii)
\end{proof}

In \cite[Section 1]{semimedial} we introduced the lattice of {\it admissible subgroups} of a left quasigroup $Q$. Given $N\leq \lmlt(Q)$ we have two equivalence relation: 
\begin{itemize}
	\item[(i)] the orbit decomposition with respect to the action of $N$, denoted by $\mathcal{O}_N$.
	\item[(ii)] The equivalence $\c{N}$ defined as
	$$a\, \c{N}\, b\, \text{ if and only if }\, L_a L_b^{-1}\in N.$$
\end{itemize}   

The assignments $\alpha\mapsto \dis_\alpha$ (resp. $\dis^\alpha$) and $N\mapsto \c{N}$ (resp. $\mathcal{O}_N$) are monotone and $\dis_\alpha\leq \dis^\alpha$ (see the characterization of congruences in terms of the properties of subgroups provided in \cite[Lemma 1.5]{semimedial}), wether in general no containment between the equivalences $\c{N}$ and $\mathcal{O}_N$ holds.

We define the lattice of admissible subgroups as
$$\N(Q)=\setof{N\trianglelefteq \lmlt(Q)}{\mathcal{O}_N\subseteq \c{N}}.$$
In particular, $\mathcal{O}_N$ is a congruence of $Q$ whenever $N$ is admissible and $\dis_\alpha,\dis^\alpha\in \N(Q)$ for every congruence $\alpha$. The assignments $N \mapsto \mathcal{O}_N$ and $\alpha\mapsto \dis^\alpha$ provide a monotone Galois connection between $\N(Q)$ and the congruence lattice of $Q$ \cite[Theorem 1.10]{semimedial}.

The {\it Cayley kernel} of a left quasigroup $Q$ is the equivalence relation $\lambda_Q$ defined by
$$a\,\lambda_Q\, b \, \text{ if and only if } \, L_a=L_b.$$
Such a relation is not a congruence in general. We say that:
\begin{itemize}
	\item[(i)] $Q$ is a {\it Cayley} left quasigroup if $\lambda_Q$ is a congruence. A class of left quasigroups is Cayley if all its members are Cayley left quasigroups.
	\item[(ii)] $Q$ is {\it faithful} if $\lambda_Q=0_Q$ and $Q$ is {\it superfaithful} if all the subalgebras of $Q$ are faithful.
	\item[(iii)] $Q$ is {\it permutation} if $\lambda_Q=1_Q$, i.e. there exists $f\in \sym{Q}$ such that $a*b=f(b)$ for every $a,b\in Q$. If $f=1$ we say that $Q$ is a {\it projection} left quasigroup (we denote by $\mathcal{P}_n$ the projection left quasigroup of size $n$).
\end{itemize}

According to \cite[Theorem 5.3]{covering_paper}, the {\it strongly abelian} congruences of left quasigroups (in the sense of \cite{TCT}) are exactly those below the Cayley kernel. Equivalently, if $\alpha$ is a congruence of a left quasigroup $Q$, then $\alpha\leq \lambda_Q$ if and only if $\dis_\alpha=1$.

A left quasigroup $Q$ is {\it connected} if its left multiplication group is transitive over $Q$. We say that $Q$ is {\it superconnected} if all the subalgebras of $Q$ are connected. We investigated superconnected left quasigroups in \cite{Super}. 

\begin{proposition}\label{pi_0}\cite[Corollary 1.6]{Super}
	A left quasigroup $Q$ is superconnected if and only if $\mathcal{P}_2\notin \textbf{{H}}\textbf{{S}}(Q)$.
\end{proposition}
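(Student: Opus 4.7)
The plan is to prove both directions essentially by directly manipulating the orbit decomposition under $\lmlt$.

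For the forward direction, I would argue the contrapositive. Suppose $\mathcal{P}_2 \in \mathbf{HS}(Q)$, so there is a subalgebra $S\leq Q$ and a congruence $\alpha\in\Con(S)$ with $S/\alpha\cong\mathcal{P}_2$. The key observation is that $\mathcal{P}_2$ being a projection left quasigroup means $L_{[a]_\alpha}=\mathrm{id}$ on $S/\alpha$ for every $a\in S$, i.e.\ each $L_a$ stabilises each $\alpha$-class of $S$ setwise. Hence every orbit of $\lmlt(S)$ is contained in an $\alpha$-class. Since $|S/\alpha|=2$, the group $\lmlt(S)$ has at least two orbits, so $S$ is not connected, contradicting superconnectedness.

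For the converse, again the contrapositive: assume some subalgebra $S\leq Q$ is not connected. I would first observe that the orbit partition $\beta=\mathcal{O}_{\lmlt(S)}$ is actually a congruence of $S$. Compatibility on the right side follows immediately from $L_c(b)=c*b$, and for the left side, if $b=h(a)$ with $h\in\lmlt(S)$, then $b*c=L_b(c)=(L_bL_a^{-1})L_a(c)=(L_bL_a^{-1})(a*c)$ which lies in the same $\lmlt(S)$-orbit as $a*c$. Thus $\beta$ is a congruence, and by construction $[a*b]_\beta=[L_a(b)]_\beta=[b]_\beta$, so $S/\beta$ is a projection left quasigroup. Since $S$ is not connected, $|S/\beta|\geq 2$, and any two-element subset of $S/\beta$ is a subalgebra isomorphic to $\mathcal{P}_2$. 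Therefore $\mathcal{P}_2\in\mathbf{SH}(S)\subseteq\mathbf{HS}(Q)$, using the standard closure $\mathbf{SH}\subseteq\mathbf{HS}$ obtained by pulling back the subalgebra along the quotient map.

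The main conceptual step is verifying that the orbit decomposition of $\lmlt(S)$ is automatically a congruence — once this is in place, the rest of both directions is essentially a size count on $S/\beta$. No obstacle is serious; the argument is elementary once one recognises that $\mathcal{P}_2$ being a quotient forces each left translation to fix the congruence classes, which is precisely the failure of transitivity.
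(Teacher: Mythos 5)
Your argument is correct, and it is essentially the standard proof: the paper itself gives no proof here but cites \cite[Corollary 1.6]{Super}, where the result rests on exactly the construction you use, namely that the orbit decomposition of $\lmlt(S)$ is a congruence whose quotient is a projection left quasigroup, so non-connectedness of a subalgebra yields $\mathcal{P}_2$ as a subalgebra of a quotient, hence in $\textbf{H}\textbf{S}(Q)$. Both directions check out (in the forward direction one should note that $L_a$ maps each $\alpha$-class into itself and $L_a^{-1}$ does as well since $\alpha$ respects $\backslash$, which gives setwise stabilisation), so there is nothing to fix.
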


The property of being (super)connected is also reflected by the properties of congruences.
\begin{lemma}\label{regularity of cong}
	Connected left quasigroups are congruence uniform and congruence regular. 
\end{lemma}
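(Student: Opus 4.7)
The plan is to exploit the fact that the left multiplication maps $L_x$, together with their inverses (available because $Q$ is a left quasigroup), act by $\alpha$-preserving permutations of $Q$ for every congruence $\alpha \in \Con(Q)$, and that connectedness makes these actions transitive.

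\medskip

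First I would record the key lemma: for every $\alpha \in \Con(Q)$, every $x, a \in Q$, and every $g \in \lmlt(Q)$, we have $g([a]_\alpha) = [g(a)]_\alpha$. The operation $\ast$ gives one inclusion: $b\,\alpha\, a$ implies $L_x(b)\,\alpha\, L_x(a)$, so $L_x([a]_\alpha)\subseteq [L_x(a)]_\alpha$. Using that $\alpha$ is also a congruence with respect to $\backslash$ and the identity $x\backslash(x\ast a)=a$, one gets the reverse inclusion: if $c\,\alpha\, x\ast a$ then $x\backslash c \,\alpha\, x\backslash(x\ast a)=a$, so $L_x^{-1}(c)\in [a]_\alpha$ and $c\in L_x([a]_\alpha)$. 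Thus each generator $L_x$ (and $L_x^{-1}$) of $\lmlt(Q)$ induces a bijection of blocks, and the claim for arbitrary $g$ follows by composition.

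\medskip

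From this lemma, congruence uniformity is immediate. Given two blocks $[a]_\alpha, [b]_\alpha$, connectedness provides $g\in \lmlt(Q)$ with $g(a)=b$, and the lemma says $g$ restricts to a bijection $[a]_\alpha\to [b]_\alpha$, so $|[a]_\alpha|=|[b]_\alpha|$.

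\medskip

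Congruence regularity requires a slightly different use of the same idea. Assume $[a]_\alpha=[a]_\beta$ for some $a\in Q$ and $\alpha,\beta\in \Con(Q)$. For an arbitrary $b\in Q$, pick $g\in \lmlt(Q)$ with $g(a)=b$ (using connectedness). Applying the lemma to both $\alpha$ and $\beta$ gives
\[
[b]_\alpha \;=\; g([a]_\alpha) \;=\; g([a]_\beta) \;=\; [b]_\beta .
\]
Since this holds for every $b\in Q$, the partitions induced by $\alpha$ and $\beta$ coincide, hence $\alpha=\beta$. No step looks like a serious obstacle; the only point that requires attention is verifying that $L_x^{-1}$ preserves $\alpha$, which is where one genuinely uses that left quasigroups carry $\backslash$ as a basic operation (so $\alpha$ is a congruence for it, not merely for $\ast$).
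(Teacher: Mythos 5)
Your proposal is correct and follows essentially the same route as the paper: both arguments rest on the fact that elements of $\lmlt(Q)$ map $\alpha$-blocks bijectively onto $\alpha$-blocks, and then use transitivity of $\lmlt(Q)$ to transport the block at $a$ to the block at any $b$, yielding uniformity and regularity simultaneously. The only difference is that you spell out the verification that each $L_x$ and $L_x^{-1}$ preserves blocks (via the $\backslash$ operation), which the paper states without proof.
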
 

\begin{proof}
	Let $Q$ be a connected left quasigroup and assume that $[a]_\alpha=[a]_\beta$ for some $a\in Q$. For every $b\in Q$ there exists $h\in \lmlt(Q)$ with $b=h(a)$. The blocks of congruences are blocks with respect to the action of $\lmlt{(Q)}$, then $$[b]_\alpha=[h(a)]_\alpha=h([a]_\alpha)=h([a]_\beta)=[h(a)]_\beta=[b]_\beta,$$
	and so $\alpha=\beta$. In particular, the mapping $h$ is a bijection between $[a]_\alpha$ and $[b]_\alpha$ for every $\alpha\in \Con(Q)$.
\end{proof}

\begin{lemma}\label{superconnected -> coherent}
	Superconnected left quasigroups are coherent.
\end{lemma}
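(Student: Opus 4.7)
The plan is to show directly that any subalgebra $S$ of a superconnected left quasigroup $Q$ that contains an $\alpha$-block must be a union of $\alpha$-blocks, by using the transitivity of $\lmlt(S)$ on $S$ together with the fact that congruence blocks are respected by $\lmlt(Q)$.

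More precisely, suppose $\alpha\in\Con(Q)$ and $[a]_\alpha\subseteq S$ for some $a\in S$. First I would observe that for every $c\in S$, the map $L_c$ restricts to a bijection of $S$: since $S$ is a subalgebra of the left quasigroup $Q$ it is closed under both $\ast$ and $\backslash$, so $L_c(S)\subseteq S$ and $L_c^{-1}(S)\subseteq S$, giving $L_c(S)=S$. Consequently, any element $h\in\lmlt(S)$, being a product of such $L_c^{\pm 1}$'s, lifts to an element $\tilde h\in \lmlt(Q)$ which agrees with $h$ on $S$ and satisfies $\tilde h(S)=S$.

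Now fix an arbitrary $b\in S$. Since $Q$ is superconnected, the subalgebra $S$ is connected, so there exists $h\in\lmlt(S)$ with $h(a)=b$. Let $\tilde h\in\lmlt(Q)$ be the corresponding lift. Because blocks of congruences of $Q$ are permuted by $\lmlt(Q)$, we have
\[
[b]_\alpha=[\tilde h(a)]_\alpha=\tilde h([a]_\alpha)\subseteq \tilde h(S)=S,
\]
where the inclusion uses the standing hypothesis $[a]_\alpha\subseteq S$. Since $b\in S$ was arbitrary, $S$ is a union of $\alpha$-blocks, as required.

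The only delicate point is the lifting step: one must justify that transitivity inside the \emph{subalgebra} $S$ (i.e.\ an action of $\lmlt(S)$) yields a permutation of $Q$ stabilising $S$, so that the action on $\alpha$-blocks of $Q$ can be invoked. That is handled above by the closure of $S$ under $\backslash$, which is exactly where the ``left quasigroup'' structure (as opposed to a plain magma) is used.
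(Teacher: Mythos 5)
Your proof is correct and follows essentially the same route as the paper's: use connectedness of the subalgebra to move $a$ to any $b\in S$ by an element of $\lmlt(S)$, then use the fact that $\lmlt(Q)$ permutes $\alpha$-blocks to conclude $[b]_\alpha\subseteq S$. The only difference is that you make explicit the lifting of $h\in\lmlt(S)$ to an element of $\lmlt(Q)$ stabilising $S$, a point the paper leaves implicit; this is a welcome clarification rather than a deviation.
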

\begin{proof}
	Let $Q$ be a superconnected left quasigroup, $M\in \mathbb{S}(Q)$ and $\alpha\in \Con(Q)$ with $[a]_\alpha\subseteq M$ for some $a\in M$. For every $b\in M$ there exists $h\in \lmlt(M)$ such that $b=h(a)$. The blocks of $\alpha$ are blocks with respect to the action of $\lmlt(Q)$ and $M$ is a subalgebra, then $h([a]_\alpha)=[b]_\alpha\subseteq M$. Therefore, $M=\bigcup_{b\in M} [b]_\alpha$.
\end{proof}

A {\it quasigroup} is a binary algebra $(Q,*,\backslash,/)$ such that $(Q,*,\backslash)$ is a left quasigroup (the {\it left quasigroup reduct} of $Q$) and $(Q,*,/)$ is a right quasigroup. The left quasigroups obtained as reducts of quasigroups are called {\it latin} (note that congruence and subalgebras of a quasigroup and its left quasigroup reduct might be different due to the different signature considered for the two structures). Latin left quasigroups are superfaithful and connected.

The {\it squaring mapping} for a left quasigroup is the map
$$\s:Q\longrightarrow Q,\quad a\mapsto a*a.$$
We denote the set of {\it idempotent elements of $Q$} by $E(Q)=Fix(\s)=\setof{a\in Q}{a*a=a}$. We say that:
\begin{itemize}
	\item[(i)] $Q$ is {\it idempotent} if $Q=E(Q)$, i.e. the identity $x*x\approx x$ holds in $Q$.
	\item[(ii)] $Q$ is {\it $2$-divisible} if $\s$ is a bijection.
	
	\item[(iii)] $Q$ is {\it $n$-multipotent} is $|\s^n(Q)|=1$ (here $\s^n=\s\circ \s^{n-1}$ denotes the usual composition of maps).
\end{itemize}

\section{Mal'cev classes of left quasigroups}\label{Sec:Mal'cev classes}

In this section we turn our attention to Mal'cev classes of left quasigroups. According to \cite[Theorem 3.13]{shape} a variety with a Taylor term does not contain any strongly abelian congruence, so in particular Taylor varieties of left quasigroup do not contain permutation left quasigroups (if $Q$ is permutation, then $1_Q=\lambda_Q$ is strongly abelian). 
%

\begin{proposition}\label{dis is transitive}
	Let $\mathcal{V}$ be a Taylor variety of left quasigroups. Then $\dis(Q)$ is transitive on $Q$ for every $Q\in \mathcal{V}$.
\end{proposition}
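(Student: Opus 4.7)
The plan is to show that if $\dis(Q)$ fails to be transitive, then $Q$ has a non-trivial permutation quotient, which would produce a non-trivial strongly abelian congruence inside $\mathcal{V}$, contradicting the fact that Taylor varieties contain no such congruences.

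First I would observe that $\dis(Q)$ is an admissible subgroup. Indeed $\dis(Q) \trianglelefteq \lmlt(Q)$ by construction, and since $L_a L_b^{-1} \in \dis(Q)$ for \emph{every} $a,b \in Q$, we trivially have $\c{\dis(Q)} = 1_Q$, so $\mathcal{O}_{\dis(Q)} \subseteq \c{\dis(Q)}$ is automatic. Hence $\dis(Q) \in \N(Q)$ and, by the results quoted from \cite[Theorem 1.10]{semimedial}, the orbit equivalence
$$\alpha \;=\; \mathcal{O}_{\dis(Q)}$$
is a congruence of $Q$.

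Next I would analyse the quotient $Q/\alpha$. The canonical surjection $\lmlt(Q) \to \lmlt(Q/\alpha)$, $L_a \mapsto L_{[a]}$, sends every element of $\dis(Q)$ to the identity of $\lmlt(Q/\alpha)$: by definition of $\alpha$, any $h \in \dis(Q)$ preserves each $\alpha$-block setwise, hence acts as the identity on $Q/\alpha$. Since $L_a L_b^{-1} \in \dis(Q)$ for all $a,b \in Q$, this forces $L_{[a]} = L_{[b]}$ in $\lmlt(Q/\alpha)$ for every $a,b \in Q$. Therefore $Q/\alpha$ is a permutation left quasigroup, and so $\lambda_{Q/\alpha} = 1_{Q/\alpha}$.

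Finally, suppose for contradiction that $\dis(Q)$ is not transitive on $Q$. Then $\alpha \neq 1_Q$, so $Q/\alpha$ has at least two elements, and consequently $1_{Q/\alpha}$ is a non-trivial congruence of $Q/\alpha$. By the characterization of strongly abelian congruences in terms of the Cayley kernel (\cite[Theorem 5.3]{covering_paper}, quoted above), the congruence $\lambda_{Q/\alpha}=1_{Q/\alpha}$ is strongly abelian. This contradicts the observation, recalled at the start of the section from \cite[Theorem 3.13]{shape}, that a Taylor variety admits no non-trivial strongly abelian congruences. Hence $\dis(Q)$ is transitive on $Q$. The only delicate point is the transfer of admissibility to the orbit-congruence, but this is precisely what is built into the framework of $\N(Q)$ already developed in the paper, so the argument should be short once that machinery is invoked.
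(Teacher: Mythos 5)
Your proof is correct and follows essentially the same route as the paper: the paper also passes to $P=Q/\mathcal{O}_{\dis(Q)}$, notes it is a permutation left quasigroup (citing \cite[Corollary 1.9]{semimedial} for what you prove by hand), and concludes $P$ is trivial because Taylor varieties of left quasigroups admit no non-trivial strongly abelian congruences. Your extra verification that $\dis(Q)$ is admissible and that the quotient is permutation is a correct unpacking of that cited result.
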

\begin{proof}
	Let $Q\in \mathcal{V}$. According to \cite[Corollary 1.9]{semimedial}, $P=Q/\mathcal{O}_{\dis(Q)}$ is a permutation left quasigroup and so $P$ is trivial, i.e. $\dis(Q)$ is transitive on $Q$.
\end{proof}

For left quasigroups, the interval of Mal'cev classes between the class of Taylor varieties and the class of coherent varieties collapses into the class of varieties with a Mal'cev term. 

\begin{theorem}\label{main th}
	Let $\mathcal{V}$ be a variety of left quasigroups. The following are equivalent:
	\begin{itemize}
		\item[(i)] $\mathcal{V}$ has a Mal'cev term.
		\item[(ii)] $\mathcal{V}$ has a Taylor term.
		\item[(iii)] $\mathcal{V}$ satisfies a non-trivial Mal'cev condition.
		\item[(iv)] $\mathcal{P}_2\notin \mathcal{V}$.
		\item[(v)] Every algebra in $\mathcal{V}$ is superconnected.
		\item[(vi)] $\mathcal{V}$ is coherent. 
		
	\end{itemize}
	In particular, every Mal'cev variety of left quasigroup is congruence uniform.
\end{theorem}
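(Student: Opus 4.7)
The plan is to establish a cycle of implications (i) $\Rightarrow$ (ii) $\Rightarrow$ (iii) $\Rightarrow$ (iv) $\Rightarrow$ (v) $\Rightarrow$ (vi) $\Rightarrow$ (i) and then derive the congruence uniformity consequence from the equivalence (i) $\Leftrightarrow$ (v) via Lemma \ref{regularity of cong}.

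The implications (i) $\Rightarrow$ (ii) $\Rightarrow$ (iii) are essentially definitional: a Mal'cev term is a Taylor term, and the existence of a Taylor term is itself a non-trivial Mal'cev condition. For (iii) $\Rightarrow$ (iv), the algebra $\mathcal{P}_2$ is a projection algebra ($a\ast b = b$ and $a\backslash b = b$), so all its term operations are projections and any Mal'cev condition it satisfies is trivial; thus $\mathcal{P}_2 \in \mathcal{V}$ would force $\mathcal{V}$ to satisfy only trivial Mal'cev conditions. For (iv) $\Rightarrow$ (v), for every $Q \in \mathcal{V}$ we have $\textbf{HS}(Q) \subseteq \mathcal{V}$, hence $\mathcal{P}_2 \notin \textbf{HS}(Q)$, and Proposition \ref{pi_0} yields superconnectedness of $Q$. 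Finally, (v) $\Rightarrow$ (vi) is exactly Lemma \ref{superconnected -> coherent}.

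The main obstacle is closing the cycle with (vi) $\Rightarrow$ (i), which I would split into two substeps. First, (vi) $\Rightarrow$ (iv): if $\mathcal{P}_2 \in \mathcal{V}$, then $\mathcal{V}$ contains the three-element projection left quasigroup $\mathcal{P}_3 \in \textbf{SP}(\mathcal{P}_2)$; in $\mathcal{P}_3$ every subset is a subalgebra and every equivalence is a congruence, so the subalgebra $\{0,2\}$ contains the block $\{2\}$ of the congruence with blocks $\{0,1\},\{2\}$ but is not a union of blocks, contradicting coherency. Second, (iv) $\Rightarrow$ (i) is the construction of a ternary Mal'cev term, the real technical heart. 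My approach would be to work inside $F = F_{\mathcal{V}}(x,y,z)$, which by Proposition \ref{pi_0} is superconnected and in particular connected, so $\lmlt(F)$ acts transitively on $\{x,y,z\}$. The transitivity supplies words in left translations $L_a$ and their inverses moving among the generators, which unfold into term identities valid throughout $\mathcal{V}$. The delicate step will be to recombine these identities into a single ternary term $m(x,y,z)$ satisfying both $m(x,y,y) \approx x$ and $m(y,y,x) \approx x$ simultaneously, most likely by a symmetrization or chaining argument that also handles the non-idempotent case.

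The final congruence uniformity claim is then immediate from the equivalence (i) $\Leftrightarrow$ (v): every algebra of a Mal'cev variety of left quasigroups is superconnected, hence connected, and Lemma \ref{regularity of cong} delivers congruence uniformity (with congruence regularity as a byproduct).
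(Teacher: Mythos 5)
Your chain (i) $\Rightarrow$ (ii) $\Rightarrow$ (iii) $\Rightarrow$ (iv) $\Rightarrow$ (v) $\Rightarrow$ (vi) coincides with the paper's (same use of Proposition \ref{pi_0} and Lemma \ref{superconnected -> coherent}), and the derivation of congruence uniformity from Lemma \ref{regularity of cong} is also the paper's. The problem is how you close the cycle. You reduce (vi) $\Rightarrow$ (i) to (iv) $\Rightarrow$ (i) and then only sketch the latter: ``recombine these identities into a single ternary term \dots most likely by a symmetrization or chaining argument.'' That recombination is precisely the hard content, and it is not routine. Connectedness of $F_{\mathcal V}(x,y)$ does give an identity $\psi_{x,y}(y)\approx x$ with $\psi_{x,y}=L_{t_1(x,y)}^{k_1}\cdots L_{t_m(x,y)}^{k_m}$, but the natural candidates collapse: for instance $m(x,y,z)=\psi_{x,y}\bigl(\psi_{z,y}^{-1}(z)\bigr)$ satisfies $m(x,y,y)\approx x$ yet $m(x,x,z)=\psi_{x,x}(x)\approx x$ rather than $z$, because the identity $\psi_{z,y}^{-1}(z)\approx y$ holds identically and destroys the dependence on $z$. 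Contrast this with Example \ref{constant power LQG}, where the Mal'cev term $L_x^{-n}L_y^{n}(z)$ works only because the special identity $L_x^n(x)\approx L_y^n(y)$ makes $L_y^n(y)$ independent of $y$; no such identity is available in general. So as written there is a genuine gap: the existence of the Mal'cev term is asserted, not proved.

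The paper avoids this entirely: the implication (vi) $\Rightarrow$ (i) is a known general fact --- every coherent variety has a Mal'cev term (Geiger \cite{coherent}; this is the containment $\text{CO}\subseteq\text{M}$ in Figure \ref{Maltsev conditions}) --- and is simply cited. Since you have already proved (v) $\Rightarrow$ (vi), you can close the cycle the same way and discard the attempted direct construction. Your intermediate step (vi) $\Rightarrow$ (iv) via the three-element projection left quasigroup $\mathcal P_3\in\textbf{SP}(\mathcal P_2)$ is correct and a nice concrete check (note $\mathcal P_2$ itself is coherent, so passing to $\mathcal P_3$ is genuinely needed), but it is redundant once (vi) $\Rightarrow$ (i) is invoked directly.
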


\begin{proof}
	The implications (i) $\Rightarrow$ (ii) and (vi) $\Rightarrow$ (i) hold in general as represented  in Figure \ref{Maltsev conditions}, (ii) $\Rightarrow$ (iii) and  (iii) $\Rightarrow$ (iv) clearly hold. 
	
	(iv) $\Rightarrow$ (v) 
	According to Proposition \ref{pi_0}, if $\mathcal{P}_2\notin \mathcal{V}$ then every left quasigroup in $\mathcal{V}$ is connected and then superconnected since $\mathcal{V}$ is closed under taking subalgebras.
	
	(v) $\Rightarrow$ (vi) 
	By Lemma \ref{superconnected -> coherent} every superconnected is coherent, i.e. $\mathcal{V}$ is coherent.
	
	According to Lemma \ref{regularity of cong}, connected left quasigroups are congruence uniform, therefore so it is any Mal'cev variety of left quasigroup.
\end{proof}

\begin{corollary}\label{teo Taylor}
	Let $Q$ be a finite idempotent left quasigroup. Then $\mathcal{V}(Q)$ has a Mal'cev term if and only if $Q$ is superconnected.
\end{corollary}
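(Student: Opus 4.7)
The plan is to split the biconditional and handle the forward implication directly from the preceding theorem, using the finiteness/idempotency hypotheses only for the converse.

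For the forward direction, no finiteness or idempotency is needed. Suppose $\mathcal{V}(Q)$ has a Mal'cev term. Then by the equivalence (i) $\Leftrightarrow$ (v) of Theorem \ref{main th}, every algebra in $\mathcal{V}(Q)$ is superconnected; since $Q \in \mathcal{V}(Q)$, in particular $Q$ itself is superconnected.

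For the converse, assume $Q$ is finite, idempotent, and superconnected. By Proposition \ref{pi_0}, superconnectedness is equivalent to $\mathcal{P}_2 \notin \textbf{HS}(Q)$. The goal is to upgrade this to $\mathcal{P}_2 \notin \mathcal{V}(Q)$, because then Theorem \ref{main th} (iv) $\Rightarrow$ (i) delivers a Mal'cev term. The bridge that makes this upgrade possible is the general result cited in \cite{SB}: for a finite idempotent algebra $A$, the variety $\mathcal{V}(A)$ admits a Taylor term whenever $A$ has no HS-image isomorphic to a two-element projection algebra (in the signature of $A$). I would apply this to $Q$, observing that the relevant two-element projection algebra in the signature of left quasigroups is precisely $\mathcal{P}_2$ (idempotent, with $L_a=\mathrm{id}$ for both $a$, so also $a\backslash b = b$). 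Thus $\mathcal{V}(Q)$ has a Taylor term, and Theorem \ref{main th} (ii) $\Rightarrow$ (i) concludes that $\mathcal{V}(Q)$ has a Mal'cev term.

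The main obstacle is the upgrade $\textbf{HS}(Q) \not\ni \mathcal{P}_2 \Longrightarrow \mathcal{V}(Q) \not\ni \mathcal{P}_2$: in general, absence of a subalgebra from $\textbf{HS}(Q)$ does not imply its absence from the whole variety. This step genuinely uses both hypotheses. Finiteness is essential because the cited lifting theorem is a local-to-global statement for finite idempotent algebras; idempotency guarantees that $\mathcal{V}(Q)$ is an idempotent variety, so the only obstruction to a Taylor term in Taylor's classification reduces to the presence of the two-element projection algebra, exactly $\mathcal{P}_2$ in our signature. Once these hypotheses are in place the argument is short; the content of the corollary is really that the external result of \cite{SB} translates into the Mal'cev-term question for left quasigroups via the equivalences already recorded in Theorem \ref{main th}.
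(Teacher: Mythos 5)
Your proof is correct and follows essentially the same route as the paper: both directions ultimately rest on the characterization from \cite{SB} that a finite idempotent algebra generates a Taylor variety if and only if $\mathcal{P}_2\notin \textbf{H}\textbf{S}(Q)$, combined with Proposition \ref{pi_0} and the collapse of Taylor to Mal'cev in Theorem \ref{main th}. The only cosmetic difference is that you derive the forward implication directly from Theorem \ref{main th} (i) $\Rightarrow$ (v) rather than from the ``only if'' half of the cited result, which correctly isolates where finiteness and idempotency are actually needed.
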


\begin{proof}
	Let $Q$ be a finite idempotent left quasigroup. According to \cite[Theorem 1.1]{SB}, $\mathcal{V}(Q)$ has Taylor term if and only if $\mathcal{P}_2\notin \textbf{{H}}\textbf{{S}}(Q)$. Thus, $\mathcal{V}(Q)$ has Taylor term if and only if $Q$ is superconnected by Proposition \ref{pi_0}. 
\end{proof}

\begin{proposition}\label{for Cayley}
	Let $\mathcal{V}$ be a Cayley (resp. idempotent) Mal'cev variety of left quasigroups and $Q\in \mathcal{V}$. Then:
	\begin{itemize}
		\item[(i)] every left quasigroups in $\mathcal{V}$ is superfaithful. 
		
		\item[(ii)] The $\dis$ operator is injective and the $\c{}$ operator is surjective and $\alpha=\c{\dis_\alpha}=\c{\dis^{\alpha}}$ for every $\alpha\in \Con(Q)$.
	\end{itemize}
	
\end{proposition}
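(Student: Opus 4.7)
The plan is to establish (i) first and then derive (ii) by unwinding definitions.

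For (i), I would argue by contradiction, assuming some $Q\in\mathcal{V}$ has distinct $a,b$ with $L_a=L_b$. In the Cayley case, $\lambda_Q$ is then a non-trivial congruence on $Q$; by the characterization recalled just before Section~\ref{Sec:Mal'cev classes} that the strongly abelian congruences of a left quasigroup are exactly those below the Cayley kernel, $\lambda_Q$ is a non-trivial strongly abelian congruence. This contradicts the fact cited at the start of Section~\ref{Sec:Mal'cev classes} that a Taylor (hence Mal'cev) variety admits no non-trivial strongly abelian congruence. In the idempotent case, $L_a=L_b$ together with $a*a=a$ and $b*b=b$ forces $a*b=L_a(b)=L_b(b)=b$ and symmetrically $b*a=a$, so $\{a,b\}$ is a sub-left-quasigroup on which $x*y=y$, i.e.\ an isomorphic copy of $\mathcal{P}_2$; this contradicts Theorem~\ref{main th}(iv). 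Since each hypothesis passes to subalgebras (being Cayley is assumed of every member of $\mathcal{V}$, and idempotence is an identity), the same argument inside any subalgebra yields superfaithfulness.

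For (ii), the chain $\alpha\subseteq \c{\dis_\alpha}\subseteq \c{\dis^\alpha}$ is immediate from the definition of $\dis_\alpha$ and from monotonicity of $N\mapsto \c{N}$ together with $\dis_\alpha\leq \dis^\alpha$. For the reverse inclusion, suppose $a\,\c{\dis^\alpha}\,b$, so $L_aL_b^{-1}\in \dis^\alpha\leq \lmlt^\alpha$. As $\lmlt^\alpha$ is the kernel of the canonical projection $\lmlt(Q)\to \lmlt(Q/\alpha)$, this forces $L_{[a]_\alpha}=L_{[b]_\alpha}$ in $\lmlt(Q/\alpha)$, i.e.\ $[a]_\alpha\,\lambda_{Q/\alpha}\,[b]_\alpha$. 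Since $\mathcal{V}$ is closed under homomorphic images and the Cayley/idempotent hypothesis is preserved, part (i) applies to $Q/\alpha$ and yields $[a]_\alpha=[b]_\alpha$, i.e.\ $a\,\alpha\,b$. This gives $\alpha=\c{\dis_\alpha}=\c{\dis^\alpha}$; injectivity of $\alpha\mapsto \dis_\alpha$ and surjectivity of $N\mapsto \c{N}$ onto $\Con(Q)$ now follow at once.

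The only delicate point is the idempotent case of (i), namely recognizing that two elements sharing the same left multiplication automatically span a copy of $\mathcal{P}_2$. Once this small computational observation is in place, both assertions of the proposition reduce to Theorem~\ref{main th}(iv) together with routine bookkeeping with the Galois connection $N\mapsto \c{N}$, $\alpha\mapsto\dis_\alpha$ recalled from \cite{semimedial}.
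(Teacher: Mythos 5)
Your proof is correct and follows essentially the same route as the paper: establish superfaithfulness via the absence of non-trivial strongly abelian congruences (Cayley case) and via the exclusion of $\mathcal{P}_2$ (idempotent case), then deduce (ii) from the chain $\alpha\leq \c{\dis_\alpha}\leq \c{\dis^\alpha}=\alpha$. The only difference is that you reprove inline what the paper delegates to citations (\cite[Lemma 1.9]{Super} for the idempotent computation and \cite[Proposition 1.6]{semimedial} for the Galois-connection chain), which makes the argument self-contained but not substantively different.
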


\begin{proof}
	(i) 
	Idempotent superconnected left quasigroups are superfaithful according to \cite[Lemma 1.9]{Super}, so the claim follows if $\mathcal{V}$ is idempotent. 
	
	Assume that $\mathcal{V}$ is a Cayley variety. 
	The Cayley kernel is a strongly abelian congruence for Cayley left quasigroups (see \cite[Proposition 5.1]{covering_paper}), therefore the left quasigroups in $\mathcal{V}$ are superfaithful. 
	
	(ii) All the left quasigroups in $\mathcal{V}$ are superfaithful by (i). According to \cite[Proposition 1.6]{semimedial} we have that
	$$\alpha\leq \c{\dis_\alpha}\leq \c{\dis^\alpha}=\alpha.$$
	and so the operator $\c \dis$ is the identity on $\Con(Q)$. 
\end{proof}

Let us turn our attention to congruence distributive varieties of left quasigroups. We have already proved that every Taylor variety of left quasigroups is also Mal'cev. Therefore, the left branch of the poset in Figure \ref{Maltsev conditions} also collapses into the Mal'cev class of distributive varieties.

\begin{theorem}\label{main th 2}
	Let $\mathcal{V}$ be a variety of left quasigroups. The following are equivalent:
	\begin{itemize}
		\item[(i)] $\mathcal{V}$ is congruence meet-semidistributive.
		\item[(ii)] $\mathcal{V}$ is congruence distributive.
		\item[(iii)] $\mathcal{V}$ is congruence arithmetic. 
	\end{itemize}
\end{theorem}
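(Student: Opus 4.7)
The implications (iii) $\Rightarrow$ (ii) $\Rightarrow$ (i) are standard and visible in Figure \ref{Maltsev conditions}: congruence arithmeticity is by definition congruence distributivity together with a Mal'cev term, and every distributive lattice is meet-semidistributive. So the real content is (i) $\Rightarrow$ (iii).

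The plan is to bootstrap from meet-semidistributivity up to a Mal'cev term via Theorem \ref{main th}, and then use the commutator characterization of Theorem \ref{KK} together with the fact that in a congruence modular variety the commutator is join-distributive in each variable. Concretely, assume $\mathcal{V}$ is congruence meet-semidistributive. Since SD$(\wedge)$ sits below $T$ in Figure \ref{Maltsev conditions}, $\mathcal{V}$ has a Taylor term (this is the classical Kearnes--Kiss style fact used implicitly in the diagram). Then by our Theorem \ref{main th}, the class of Taylor varieties of left quasigroups coincides with the class of varieties admitting a Mal'cev term, so $\mathcal{V}$ has a Mal'cev term; in particular $\mathcal{V}$ is congruence permutable and hence congruence modular.

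Now I would invoke Theorem \ref{KK}(iii), which gives $[\alpha,\beta]=\alpha\wedge\beta$ for every pair of congruences of any $A\in\mathcal{V}$. Because $\mathcal{V}$ is congruence modular, the commutator is additive in each argument, that is,
\[
[\alpha,\beta\vee\gamma]\;=\;[\alpha,\beta]\vee[\alpha,\gamma]
\]
for all $\alpha,\beta,\gamma\in\Con(A)$. Substituting Theorem \ref{KK}(iii) on both sides yields
\[
\alpha\wedge(\beta\vee\gamma)\;=\;(\alpha\wedge\beta)\vee(\alpha\wedge\gamma),
\]
so $\Con(A)$ is distributive. Hence $\mathcal{V}$ is congruence distributive, and combined with the Mal'cev term this gives congruence arithmeticity, completing (i) $\Rightarrow$ (iii).

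The main obstacle is really concentrated in the first bootstrap step: one needs that SD$(\wedge)$ forces a Taylor term (so that Theorem \ref{main th} becomes applicable), and that once we are in a congruence modular setting meet-semidistributivity can be upgraded to full distributivity. Both ingredients are classical consequences of commutator theory as developed in \cite{comm} and summarized in Theorem \ref{KK}; the argument above simply chains them through the rigidity result for left quasigroups in Theorem \ref{main th}. An alternative purely lattice-theoretic finish is to note that any modular, meet-semidistributive lattice is distributive, which yields the same conclusion once congruence modularity has been secured from the Mal'cev term.
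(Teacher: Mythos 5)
Your proof is correct and follows essentially the route the paper intends (the paper itself only sketches the argument in the paragraph preceding the theorem statement): meet-semidistributivity gives a Taylor term, Theorem \ref{main th} upgrades this to a Mal'cev term, and then the neutral commutator of Theorem \ref{KK}(iii) together with join-additivity of the commutator in congruence modular varieties yields distributivity, hence arithmeticity. Both of your ways of finishing (commutator additivity, or the purely lattice-theoretic fact that a modular meet-semidistributive lattice is distributive) are valid.
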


According to Theorems \ref{main th} and \ref{main th 2}, for left quasigroups the poset of Mal'cev classes in Figure \ref{Maltsev conditions} turns into the one in Figure \ref{Maltsev conditions for left quasigroup}.

A term $t(x_1,\ldots, x_n)$ in the language of left quasigroups is a well-formed formal expression using the variables $x_1,\dots,x_n$ and the operations $\{*,\backslash\}$. It is easy to see that the term $t$ is given by 
\begin{equation}
	t(x_1,\ldots x_n)=u(x_1,\ldots, x_n) \bullet r(x_1,	\ldots,x_n)
\end{equation}
where $\bullet\in \{*,\backslash\}$ and $u$ and $r$ are suitable subterms. Let $u$ be a $n$-ary term.  We define
\begin{align*}
	L_{u(x_1,\ldots,x_n)}^0(y)&=y\\ 
	L_{u(x_1,\ldots,x_n)}^{k+1}(y)&=u(x_1,\ldots,x_n)*L_{u(x_1,\ldots,x_n)}^k(y) ,\\
	L_{u(x_1,\ldots,x_n)}^{k-1}(y)&=u(x_1,\ldots,x_n)\backslash L_{u(x_1,\ldots,x_n)}^k(y) ,
\end{align*}
for $k\in\mathbb{Z}$. Using this notation we have that every term $t$ can be written as
$$t(x_1,\ldots,x_n)=L_{u_1(x_1,\ldots,x_n)}^{k_1}\ldots L_{u_m(x_1,\ldots, x_n)}^{k_m}(x_R)$$
where $u_i$ is a suitable subterm, $k_i=\pm 1$ for $1\leq i\leq m$ and $x_R\in \setof{x_i}{i=1\ldots n}$. We say that $x_r$ is the {\it rightmost variable of $t$}.


Every identity in the language of left quasigroups $t_1 \approx t_2$ has the form
\begin{align*}
	L_{w_1(x_1,\ldots,x_n)}^{k_1}\ldots L_{w_m(x_1,\ldots,x_n)}^{k_m}(x_R) &\approx L_{u_1(y_1,\ldots,y_l)}^{r_1}\ldots L_{u_l(y_1,\ldots,y_l)}^{r_l}(y_R),
\end{align*}
or equivalently, 
\begin{align}\label{identity for Taylor}
	L_{u_l(y_1,\ldots,y_l)}^{-r_l}\ldots L_{u_1(y_1,\ldots,y_l)}^{-r_1}		L_{w_1(x_1,\ldots,x_n)}^{k_1}\ldots L_{w_m(x_1,\ldots,x_n)}^{k_m}(x_R) &\approx y_R.
\end{align}
The projection left quasigroup $\mathcal{P}_2$ satisfies \eqref{identity for Taylor} if and only if $x_R=y_R$. So a variety of left quasigroups $\mathcal{V}$ has a Mal'cev term if and only if it satisfies an identity as in \eqref{identity for Taylor} with $x_R \not= y_R$.

Note that, an identity as in \eqref{identity for Taylor} might have just the trivial model. For instance if $\mathcal{V}$ is a variety of idempotent left quasigroups satisfying such an identity and the variable $x$ does not appear in the left handside then $\mathcal{V}$ is trivial. Indeed, identifying all the variables $x_1,\ldots, x_n,y $ we have $L_y^{k_1+\ldots+ k_m}(y)=y\approx x$.

\begin{example}	\label{latin Maltsev}
	
	A variety axiomatized by some identities as in \eqref{identity for Taylor} might be made up of latin left quasigroups. For instance, Mal'cev varieties of left quasigroups are provided by varieties of quasigroup in which every member is {\it term equivalent} to its left quasigroup reduct. This is the case of the following examples (for an example of a Mal'cev variety of latin left quasigroups not arising from quasigroups see Proposition \ref{unipotent are latin}).
	\begin{itemize}
		\item[(i)] The variety of commutative left quasigroups defined by the identity
		$$x*y=y*x.$$
		
		%
		%
		
		\item[(ii)] Let $n\in \mathbb{N}$. The variety of left quasigroups satisfying the identity
		\begin{displaymath}
			(\ldots((x\ast \underbrace{y)\ast y)\ldots)\ast y}_{n} \approx x.
		\end{displaymath}
		%
		\item[(iii)] The variety of paramedial left quasigroups, identified by the identity
		$$(x* y)*(z* t)=(t* y)*(z* x).$$
	\end{itemize}
	
\end{example}

\begin{example}\label{constant power LQG} Mal'cev varieties of left quasigroups are not limited to varieties of latin left quasigroups, as witnessed by the following examples.
	\begin{itemize}
		
		\item[(i)] Let $\mathcal{V}_n$ be the variety of left quasigroups satisfying $L_x^n(x)\approx L_y^n(y)$ where $n\in \mathbb{Z}$. Then
		\begin{equation*}\label{m}
			m(x,y,z)=L_x^{-n}L_y^{n}(z)
		\end{equation*}
		is a Mal'cev term. Let $n>0$, $Q$ be a set and $e$ be a fixed element in $Q$. We define $L_e=1$ and $L_a$ to be any cycle $(a, \ldots, e)$ of length $n$ for every $a\in Q$ (if $n<0$ we define $L_a^{-1}$ in the same way). Then $(Q,*)\in \mathcal{V}_n$. 
		%
		%
		
		\item[(ii)] The variety of $n$-multipotent left quasigroups is axiomatized by the identity
		$$\s^n(x)=L_{\s^{n-1}(x)}L_{\s^{n-2}(x)}\ldots L_{\s(x)}L_x(x)\approx L_{\s^{n-1}(y)}L_{\s^{n-2}(y)}\ldots L_{\s(y)}L_y(y)=\s^n(y).$$
		A Mal'cev term for $n$-multipotent left quasigroups is $$m(x,y,z)=\left(L_{\s^{n-2}(x)}\ldots L_{\s(x)}L_x\right)^{-1}  L_{\s^{n-2}(y)}\ldots L_{\s(y)}L_y(z).$$
	\end{itemize}
\end{example}

%
%
%
%

\begin{example}\label{nonexmple}
	Let $\mathfrak{G}$ be a variety of groups. We denote the class of left quasigroups such that the left multiplication group (resp. displacement group) belongs to $\mathfrak{G}$ by $L(\mathfrak{G})$ (resp. $D(\mathfrak{G})$). According to Lemma \ref{dis of HSP} such classes are varieties. Since $\lmlt(\mathcal{P}_2)=\dis(\mathcal{P}_2)=1$ then $\mathcal{P}_2$ belongs to $L(\mathfrak{G})$ and to $D(\mathfrak{G})$ and so they have no Mal'cev term.
\end{example}

\section{Semimedial left quasigroups}\label{Sec:semimedial}

{\it Semimedial} left quasigroups are defined by the semimedial law:
$$(x*y)*(x*z)\approx (x*x)*(y*z).$$
The projection left quasigroup $\mathcal{P}_2$ satisfies the semimedial law and so the whole variety of semimedial left quasigroup in not Mal'cev.

A relevant subvariety of $2$-divisible semimedial left quasigroups is the variety of {\it racks}, axiomatized by the identity
$$x*(y*z)=(x*y)*(x*z).$$
Idempotent semimedial left quasigroups are racks and they are called {\it quandles}. If $Q$ is semimedial then the squaring map $\s$ is a homomorphism and so if $h=L_{a_1}^{k_1}\ldots L_{a_n}^{k_n}\in \lmlt(Q)$ we have
$$\s h=\underbrace{L_{\s(a_1)}^{k_1}\ldots L_{\s(a_n)}^{k_n}}_{=h^\s} \s$$
and the subset $E(Q)=\setof{a\in Q}{a*a=a}$ is a subquandle of $Q$. {\it Medial} left quasigroups, i.e. those for which
$$(x*y)*(z*t)=(x*z)*(y*t)$$
holds are also semimedial. 

For a semimedial left quasigroup $Q$, the admissible subgroup are 
$$\N(Q)=\setof{N\trianglelefteq \lmlt(Q)}{N^\s\leq N}$$
where $N^\s=\setof{h^\s}{h\in N}$. Note that $[g,h]^\s=[g^\s,h^\s]$ for every $g,h\in \lmlt(Q)$. Thus, if $N\in \N(Q)$ then $[\lmlt(Q),N]\in \N(Q)$ (see \cite[Lemma 3.1]{semimedial}).

The relation $\c{N}$ is a congruence for every admissible subgroup $N$ and the assignments $\alpha\mapsto \dis_\alpha$ and $N\mapsto \c{N}$ provide a second monotone Galois connection between the lattice of congruences and the admissible subgroups \cite[Theorem 3.5]{semimedial}. Such a Galois connection is also well-behaved with respect to the commutator of congruences. Indeed, in a Mal'cev variety the commutator of congruences in the sense of \cite{comm} is completely determined by such Galois connection.

\begin{lemma}\label{commutator for superconnected}
	Let $\mathcal{V}$ be a Mal'cev variety of semimedial left quasigroups and $Q\in \mathcal{V}$. Then 
	$$[\alpha,\beta]=\c{[\dis_\alpha,\dis_\beta]}$$ for every $\alpha,\beta\in \mathrm{Con}(Q)$.
\end{lemma}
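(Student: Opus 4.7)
The plan is to use the monotone Galois connection between $\N(Q)$ and $\Con(Q)$ provided by $\alpha\mapsto \dis_\alpha$ and $N\mapsto \c{N}$ from \cite[Theorem 3.5]{semimedial}, showing that under the Mal'cev hypothesis it intertwines the two commutator operations. First I would record the consequence of Theorem \ref{KK}/\cite[Theorem 3.13]{shape} that no algebra in $\mathcal{V}$ has a non-trivial strongly abelian congruence. Combined with the characterization $\alpha\le\lambda_Q\Leftrightarrow \dis_\alpha=1$ recalled after the Cayley kernel is introduced, this yields that $\dis$ is injective on $\Con(Q)$, and then, arguing as in Proposition \ref{for Cayley}(ii), one obtains $\alpha=\c{\dis_\alpha}$ for every $\alpha\in\Con(Q)$. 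This identity is the lever that turns the Galois correspondence into an actual isomorphism between $\Con(Q)$ and its image under $\dis$.

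Next, I would prove the inclusion $\c{[\dis_\alpha,\dis_\beta]}\le [\alpha,\beta]$ by the Galois inequality: it suffices to show $[\dis_\alpha,\dis_\beta]\le \dis_{[\alpha,\beta]}$. Since $\dis_\gamma$ is a normal subgroup and both factors are generated as normal closures of $\{L_aL_b^{-1}:a\gamma b\}$, the computation reduces to showing that a commutator $[L_aL_b^{-1},L_cL_d^{-1}]$ (with $a\,\alpha\,b$, $c\,\beta\,d$) lies in $\dis_{[\alpha,\beta]}$. Using the semimedial identity to control conjugation by left translations and working in $Q/[\alpha,\beta]$, where the images of $L_aL_b^{-1}$ and $L_cL_d^{-1}$ satisfy the usual commutator relations of a centralizing pair of congruences, one gets the desired containment.

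For the reverse inclusion $[\alpha,\beta]\le \c{[\dis_\alpha,\dis_\beta]}$, set $\delta=\c{[\dis_\alpha,\dis_\beta]}$ and verify the term condition $C(\alpha,\beta;\delta)$, which in a Mal'cev variety characterizes $[\alpha,\beta]$ as the least congruence with that property. Given a term $t$, $\alpha$-related tuples $\bar a\,\alpha\,\bar a'$ and $\beta$-related tuples $\bar b\,\beta\,\bar b'$ with $t(\bar a,\bar b)\,\delta\,t(\bar a,\bar b')$, decompose $t$ as a product of left translations $L_{u_i(\cdots)}^{\pm 1}$ applied to a rightmost variable as in \eqref{identity for Taylor}. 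The effect of changing $\bar b$ to $\bar b'$ differs from the effect of changing $\bar a$ to $\bar a'$ by an element of $[\dis_\alpha,\dis_\beta]$, since translations parametrized by $\alpha$-related (respectively $\beta$-related) parameters differ by elements of $\dis_\alpha$ (respectively $\dis_\beta$), and these groups commute modulo $[\dis_\alpha,\dis_\beta]$. Unraveling through the definition of $\c{}$ gives $t(\bar a',\bar b)\,\delta\,t(\bar a',\bar b')$.

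The main obstacle is the second inclusion: the commutator-theoretic term condition must be translated into a group-theoretic statement about the left multiplication group, and this requires that the term-induced composition of translations behave well under the substitution of $\alpha$- and $\beta$-related parameters. Semimediality, the injectivity of $\dis$, and the Mal'cev presentation \eqref{identity for Taylor} are precisely what is needed to push the argument through, so the bulk of the proof is a careful bookkeeping of how an elementary swap of $\bar a$ with $\bar a'$ (or $\bar b$ with $\bar b'$) inside a product of left translations introduces a controlled error lying in $\dis_\alpha$ or $\dis_\beta$, with the two kinds of errors commuting modulo $[\dis_\alpha,\dis_\beta]$.
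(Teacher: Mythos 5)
Your proposal sets out to reprove from scratch what the paper obtains by reduction to a citation: the paper's own proof is two lines, observing that a Mal'cev variety of semimedial left quasigroups is Cayley by \cite[Proposition 3.6]{semimedial}, hence superfaithful by Proposition \ref{for Cayley}(i), and then invoking \cite[Proposition 3.10]{semimedial}, which is exactly the identity $[\alpha,\beta]=\c{[\dis_\alpha,\dis_\beta]}$ for superfaithful semimedial left quasigroups. There is nothing wrong in principle with re-deriving that external proposition, but as written your argument has a concrete gap already in the first step. From ``no non-trivial strongly abelian congruence'' together with the equivalence $\alpha\leq\lambda_Q\Leftrightarrow\dis_\alpha=1$ you may conclude only that no non-trivial \emph{congruence} lies below $\lambda_Q$; since $\lambda_Q$ is in general merely an equivalence relation, this does not give $\lambda_Q=0_Q$, and without faithfulness (indeed superfaithfulness) you cannot run the argument of Proposition \ref{for Cayley}(ii) to get $\alpha=\c{\dis_\alpha}$, which your whole strategy leans on. The missing link is precisely the Cayley property of semimedial Mal'cev varieties: once $\lambda_Q$ is known to be a congruence it is strongly abelian and hence trivial, and superfaithfulness follows. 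This is the step the paper makes explicit and you omit.

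The second, larger issue is that the two inclusions you describe are the entire mathematical content of \cite[Proposition 3.10]{semimedial} and are asserted rather than proved. For $[\dis_\alpha,\dis_\beta]\leq\dis_{[\alpha,\beta]}$ you appeal to ``the usual commutator relations of a centralizing pair of congruences'' in $Q/[\alpha,\beta]$, i.e.\ to the statement that $[\alpha,\beta]=0$ forces the relative displacement groups to commute; that is itself a theorem of the commutator theory for racks and semimedial left quasigroups, not a formality. For the reverse inclusion, the verification of $C(\alpha,\beta;\c{[\dis_\alpha,\dis_\beta]})$ requires, beyond the bookkeeping you describe, handling the case where the rightmost variable of the term is among the substituted ones, and converting the resulting orbit statement (``$t(\bar a',\bar b)$ and $t(\bar a',\bar b')$ differ by an element of $[\dis_\alpha,\dis_\beta]$'') into the $\c{}$-statement (``$L_{t(\bar a',\bar b)}L_{t(\bar a',\bar b')}^{-1}\in[\dis_\alpha,\dis_\beta]$''); this passage uses admissibility of $[\dis_\alpha,\dis_\beta]$ and, again, faithfulness. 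A complete write-up would either supply these arguments or, more economically, do what the paper does and reduce to the cited proposition.
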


\begin{proof}
	The variety $\mathcal{V}$ is Cayley (\cite[Proposition 3.6]{semimedial}), and so the left quasigroups in it are superfaithful by Proposition \ref{for Cayley}(i). Therefore we can apply directly \cite[Proposition 3.10]{semimedial}
\end{proof}

Let us show that unipotent semimedial left quasigroups are latin, providing an example of variety of latin left quasigroups that is not term equivalent to a variety of quasigroups. Recall that a group $G$ acting on a set $Q$ is {\it regular} if for every $a,b\in Q$ there exists a unique $g\in G$ such that $b=g\cdot a$. Equivalently the action is transitive and the pointwise stabilizers are trivial.

\begin{proposition}\label{unipotent are latin}
	Let $Q$ be a unipotent semimedial left quasigroup and $\s(Q)=\{e\}$. Then:
	\begin{itemize}
		\item[(i)] the group $\dis(Q)$ is regular and $\dis(Q)=\setof{ L_a L_e^{-1}}{a\in Q}$.
		\item[(ii)] $Q$ is latin.
	\end{itemize}
\end{proposition}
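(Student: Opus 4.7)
The whole argument rests on a single collapse that unipotency forces in the semimedial law. Since $x*x=e$ for every $x$, the identity $(x*y)*(x*z)=(x*x)*(y*z)$ becomes $L_{x*y}L_x=L_e L_y$; writing $a=x*y$ (so $y=x\backslash a$) and relabeling $x$ as $b$ yields the key formula
\begin{equation}
L_a L_b = L_e L_{b\backslash a}, \qquad a,b\in Q. \tag{$\star$}
\end{equation}
Observe also that ``unipotent'' coincides with ``$1$-multipotent,'' so by Example \ref{constant power LQG}(ii) the ambient variety is Mal'cev and Proposition \ref{dis is transitive} hands us that $\dis(Q)$ acts transitively on $Q$.

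I next plan to show that the set $T=\{L_a L_e^{-1}:a\in Q\}$ is a normal subgroup of $\lmlt(Q)$. All manipulations reduce to $(\star)$ and its specialization $L_e L_y L_e^{-1}=L_{e*y}$ obtained by setting $x=e$ in the derivation of $(\star)$. For product closure, I will rearrange $(\star)$ into $L_e^{-1}L_b=L_{e\backslash b}L_e^{-1}$ and push it through $(L_a L_e^{-1})(L_b L_e^{-1})$, arriving at $L_{e*((e\backslash b)\backslash a)}L_e^{-1}$. For inverse closure, I will check that $L_e L_a^{-1}=L_{(e\backslash a)*e}L_e^{-1}$; the key point is that $(e\backslash a)\backslash\bigl((e\backslash a)*e\bigr)=e$, so $(\star)$ collapses the relevant product to $L_e^2$. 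For normality, the conjugate $L_c(L_a L_e^{-1})L_c^{-1}$ unfolds via three applications of $(\star)$ to the clean expression $L_{(e\backslash c)*(a\backslash c)}L_e^{-1}$.

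Once $T$ is a normal subgroup of $\lmlt(Q)$ contained in $\dis(Q)$ and containing each generator $L_a L_b^{-1}=(L_a L_e^{-1})(L_b L_e^{-1})^{-1}$ of the normal closure defining $\dis(Q)$, it must equal $\dis(Q)$. The parametrization $a\mapsto L_a L_e^{-1}$ is injective, since $L_a=L_b$ forces $e=L_a(a)=L_b(a)=b*a$, whence $a=L_b^{-1}(e)=b$ using $L_b(b)=e$. This settles the set equality in (i). For regularity, transitivity is already in hand, so it remains only to check that the stabilizer of $e$ in $\dis(Q)$ is trivial: if $L_a L_e^{-1}$ fixes $e$, then $L_a(e)=a*e=e$, and comparing with $a*a=e$ via injectivity of $L_a$ gives $a=e$, so $L_a L_e^{-1}=1$.

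For part (ii) I will cash in regularity. Given $a,b\in Q$, the unique $g\in \dis(Q)$ with $g(e*b)=a$ can be written as $g=L_x L_e^{-1}$; a direct computation gives $g(e*b)=L_x(b)=x*b$, so $x*b=a$ admits exactly one solution $x$. Hence every right multiplication $R_b$ is a bijection, and combined with the left quasigroup structure this shows $Q$ is latin. The hard part of the whole proof is the $(\star)$-calculus: discovering that every product or conjugate involving $L_e^{\pm 1}$ can be repushed into the canonical form $L_? L_e^{-1}$ is where the rigidity of unipotent semimedial left quasigroups makes itself felt; once that formal machinery is in place the remaining steps are bookkeeping.
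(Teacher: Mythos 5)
Your proof is correct, but it reaches part (i) by a genuinely different route than the paper's. The paper's argument is very short and rests on two imported facts about semimedial left quasigroups: every $h=L_{a_1}^{k_1}\cdots L_{a_n}^{k_n}\in\dis(Q)$ has exponent sum $k_1+\cdots+k_n=0$ (so that $h^\s=L_e^{k_1+\cdots+k_n}=1$ by unipotency), and $L_{h(a)}=h^\s L_a h^{-1}$; together these kill every point stabilizer in one stroke, and the single identity $e=(e\ldiv a)*(e\ldiv a)=L_{e\ldiv a}L_e^{-1}(a)$ then delivers both transitivity and the parametrization $\dis(Q)=\setof{L_aL_e^{-1}}{a\in Q}$. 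You instead verify by direct $(\star)$-calculus that $T=\setof{L_aL_e^{-1}}{a\in Q}$ is a normal subgroup of $\lmlt(Q)$ containing the generators $L_aL_b^{-1}$, hence equal to $\dis(Q)$, and you import transitivity from Proposition \ref{dis is transitive} after noting that unipotent left quasigroups form a Mal'cev variety via Example \ref{constant power LQG}(ii); all of your closure, inversion, conjugation and stabilizer computations check out (in particular $e*e=e$, so $L_e^{-1}$ fixes $e$ and the stabilizer argument is sound). What your approach buys is an explicit, essentially self-contained description of the group law on $\dis(Q)$ in the coordinates $a\mapsto L_aL_e^{-1}$, at the price of more computation and a detour through the Taylor-variety machinery; the paper's route is shorter but leans on \cite[Lemma 1.4]{semimedial} and the $h\mapsto h^\s$ formalism. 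Part (ii) is essentially the same in both: regularity of $\dis(Q)$ in the given parametrization forces each right translation to be a bijection.
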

\begin{proof}

	(i) Let $h =L_{a_1}^{k_1}\ldots L_{a_n}^{k_n}\in \dis(Q)$. According to \cite[Lemma 1.4]{semimedial} $k_1+\ldots+k_n=0$ and so $h^\s=L_{\s(a_1)}^{k_1}\ldots L_{\s(a_n)}^{k_n}=L_e^{k_1+\ldots +k_n}=1$. If $h\in \dis(Q)_a$, then $L_a=L_{h(a)}=h^s L_a h^{-1}=L_a h^{-1}$, i.e. $h=1$ and so $\dis(Q)$ is regular. On the other hand, $e=(e\ldiv a)*(e\ldiv a)=L_{e\ldiv a} L_e^{-1}(a)$, and so we have $\dis(Q)=\setof{L_a L_e^{-1}}{a\in Q}$.

	(ii) Let $a,b\in Q$. According to (i) $\dis(Q)=\setof{L_c L_e^{-1}}{c\in Q}$ and it is regular. Thus, there exists a unique $c$ such that
	$$a=L_c L_e^{-1}(b)=c*(e\ldiv b)$$
	and so the right multiplication $R_{e\ldiv b}$ is biejctive for every $b\in Q$.
\end{proof}

\subsection{Congruence distributive varieties of semimedial left quasigroups}

According to Theorem \ref{main th 2} we have that congruence meet-semidistributive varieties of left quasigroups are congruence distributive. For sememedial left quasigroups congruence distributivity is determined by the properties of the relative displacement groups and of the admissible subgroups.

\begin{proposition}\label{distributive semimedial}
	Let $\mathcal{V}$ be a variety of semimedial left quasigroup. The following are equivalent: 
	\begin{itemize}
		\item[(i)] $\mathcal{V}$ is distributive.
		
		\item[(ii)] $\dis_\alpha=[\dis_\alpha,\dis_\alpha]$ for every $Q\in\mathcal{V}$ and $\alpha\in \mathrm{Con}(Q)$. 
		
		\item[(iii)] If $N\in\N(Q)$ is solvable then $N=1$ for every $Q\in\mathcal{V}$.

	\end{itemize}
\end{proposition}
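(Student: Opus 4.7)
\emph{Plan.} I propose to prove the cycle $(\mathrm{i})\Rightarrow(\mathrm{ii})\Rightarrow(\mathrm{iii})\Rightarrow(\mathrm{i})$, exploiting the monotone Galois correspondence $\alpha\mapsto\dis_\alpha$, $N\mapsto\c{N}$ between $\Con(Q)$ and $\N(Q)$ together with the commutator formula of Lemma~\ref{commutator for superconnected}. In the Mal'cev semimedial setting (forced by (i), and implicit wherever (ii) and (iii) are non-trivial) the variety is Cayley and superfaithful, so Proposition~\ref{for Cayley} applies: $\alpha=\c{\dis_\alpha}$, $\lambda_Q=0_Q$, and the displacement operator is injective.

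For $(\mathrm{i})\Rightarrow(\mathrm{ii})$, Theorem~\ref{main th 2} promotes distributivity to congruence arithmeticity -- hence a Mal'cev term -- and Theorem~\ref{KK} gives $[\alpha,\alpha]=\alpha$. Lemma~\ref{commutator for superconnected} rewrites this identity as $\c{[\dis_\alpha,\dis_\alpha]}=\alpha=\c{\dis_\alpha}$. Applying $\dis$ and invoking the Galois identities $\dis_{\c{\dis_\alpha}}=\dis_\alpha$ and $\dis_{\c{N}}\leq N$ produces $\dis_\alpha\leq[\dis_\alpha,\dis_\alpha]$; the reverse containment is automatic.

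For $(\mathrm{ii})\Rightarrow(\mathrm{iii})$, given a non-trivial solvable $N\in\N(Q)$, the identity $[g,h]^\s=[g^\s,h^\s]$ keeps its derived series inside $\N(Q)$, so its last non-trivial term $M$ is abelian and admissible. Setting $\alpha=\c{M}$, the Galois connection gives $\dis_\alpha\leq M$, so $[\dis_\alpha,\dis_\alpha]\leq[M,M]=1$, and (ii) then collapses $\dis_\alpha$ to $1$. Transporting back through Proposition~\ref{for Cayley} yields $\c{M}=\c{1}=\lambda_Q=0_Q$, and this combined with the admissibility of $M$ and the faithful action of $\lmlt(Q)$ on $Q$ contradicts $M\neq 1$. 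For $(\mathrm{iii})\Rightarrow(\mathrm{i})$, by Theorems~\ref{main th 2} and~\ref{KK} it suffices to forbid non-trivial abelian congruences: if $\alpha>0_Q$ with $[\alpha,\alpha]=0_Q$, then Lemma~\ref{commutator for superconnected} gives $\c{[\dis_\alpha,\dis_\alpha]}=0_Q$, Galois-closedness of $[\dis_\alpha,\dis_\alpha]$ forces it to equal $\dis_{0_Q}=1$ so $\dis_\alpha$ is abelian, and $\alpha=\c{\dis_\alpha}\neq0_Q$ yields $\dis_\alpha\neq 1$ -- producing a non-trivial solvable admissible subgroup that contradicts (iii).

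The main obstacle is establishing Galois-closedness of the commutator subgroups $[\dis_\alpha,\dis_\beta]$ inside $\N(Q)$, equivalently the refinement $\dis_{[\alpha,\beta]}=[\dis_\alpha,\dis_\beta]$ of Lemma~\ref{commutator for superconnected}. This fixed-point property of $[\dis_\alpha,\dis_\beta]$ under $\dis\circ\c{}$ is what legitimizes the transfer between congruence-side and group-side identities used in all three implications -- most directly the passage from $\c{[\dis_\alpha,\dis_\alpha]}=0_Q$ to $[\dis_\alpha,\dis_\alpha]=1$ in $(\mathrm{iii})\Rightarrow(\mathrm{i})$, the dual collapse driving $(\mathrm{i})\Rightarrow(\mathrm{ii})$, and the admissible-subgroup argument closing $(\mathrm{ii})\Rightarrow(\mathrm{iii})$.
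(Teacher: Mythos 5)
Your cycle (i) $\Rightarrow$ (ii) $\Rightarrow$ (iii) $\Rightarrow$ (i) is the same as the paper's, and the first two legs essentially reproduce its argument. For (i) $\Rightarrow$ (ii), Theorem~\ref{main th 2} supplies a Mal'cev term, so Lemma~\ref{commutator for superconnected} together with the one\--sided Galois inequality $\dis_{\c{N}}\leq N$ gives $\dis_\alpha=\dis_{[\alpha,\alpha]}=\dis_{\c{[\dis_\alpha,\dis_\alpha]}}\leq[\dis_\alpha,\dis_\alpha]$, exactly as in the paper. Your (ii) $\Rightarrow$ (iii) is a mild repackaging of the paper's: you pass to $\alpha=\c{M}$ for the last non\--trivial term $M$ of the derived series, where the paper passes to $\beta=\mathcal{O}_{M}$ and cites \cite[Lemma 2.6]{semimedial}; both versions bottom out at $\lambda_Q=0_Q$.

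The genuine gap is in (iii) $\Rightarrow$ (i). There you invoke Lemma~\ref{commutator for superconnected} and the identity $\alpha=\c{\dis_\alpha}$ from Proposition~\ref{for Cayley}, but both of these are stated only for \emph{Mal'cev} varieties, and Mal'cev\--ness is precisely a consequence of the conclusion (i) you are trying to reach: the argument is circular. On top of that you lean on ``Galois\--closedness of $[\dis_\alpha,\dis_\beta]$'', i.e.\ the equality $\dis_{[\alpha,\beta]}=[\dis_\alpha,\dis_\beta]$, which you yourself identify as the main obstacle and never establish; the paper never needs it, since the containment $\dis_{\c{N}}\leq N$ suffices everywhere the Galois connection is used. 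The paper's route for (iii) $\Rightarrow$ (i) avoids both problems: if $\alpha$ is an abelian congruence then $\dis_\alpha$ is an abelian, hence solvable, admissible subgroup by \cite[Corollary 5.4]{CP} --- a fact requiring no Mal'cev hypothesis --- so (iii) forces $\dis_\alpha=1$ and therefore $\alpha\leq\lambda_Q$; Theorem~\ref{KK} and Theorem~\ref{main th 2} then conclude. You should replace your last leg with this direct argument. (Both your write\--up and the paper tacitly use $\lambda_Q=0_Q$ in the implications starting from (ii) or (iii), before faithfulness has been derived; that is a shared issue worth flagging, but the circular appeal to the Mal'cev\--only lemmas and the unproved closedness claim are specific to your proof.)
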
 

\begin{proof}
	It is enough to prove the equivalence for meet-semidistributive varieties thanks to Theorem \ref{main th 2}.
	
	Let $Q\in\mathcal{V}$ and $\alpha\in \mathrm{Con}(Q)$. By Proposition \ref{commutator for superconnected} we have 
	$$\dis_{[\alpha,\alpha]}=\dis_{\c{[\dis_\alpha,\dis_\alpha]}}\leq [\dis_\alpha,\dis_\alpha]\leq \dis_\alpha.$$
	
	(i) $\Rightarrow$ (ii) By Theorem \ref{KK} we have $[\alpha,\alpha]=\alpha$ and so $\dis_\alpha=\dis_{[\alpha,\alpha]}=[\dis_\alpha,\dis_\alpha]$.
	%
	
	(ii) $\Rightarrow$ (iii) Let $N\in \N(Q)$ be solvable of length $n$ and let $D$ be the non-trivial $(n-1)$th element of the derived series of $N$. So $D$ is abelian and it is in $\N(Q)$. Hence, according to \cite[Lemma 2.6]{semimedial}, $\beta=\mathcal{O}_{D}$ is a non-trivial abelian congruence of $Q$. Therefore $\dis_\beta$ is abelian and we have $\dis_\beta=[\dis_\beta,\dis_\beta]=1$. Hence, $\beta\leq \lambda_Q=0_Q$, contradiction.
	
	(iii) $\Rightarrow$ (i) If $\alpha$ is abelian then $\dis_\alpha$ is abelian \cite[Corollary 5.4]{CP}. Hence $\dis_\alpha=[\dis_\alpha,\dis_\alpha]=1$, i.e. $\alpha\leq\lambda_Q=0_Q$. 
\end{proof}

If $Q$ is a $2$-divisible semimedial left quasigroup then 
$$\N(Q)=\setof{N\trianglelefteq \lmlt(Q)}{\s N\s^{-1}\leq N}$$
since $\s$ is bijective. In particular, $Z(N)$ is a characteristic subgroup of $N$, and so it is normal in $\lmlt(Q)$ and $\s Z(N) \s^{-1}\leq Z(N)$. Thus, $Z(N)\in \N(Q)$.
\begin{proposition}\label{2 div and distrib}
	Let $\mathcal{V}$ be a variety of $2$-divisible semimedial left quasigroup. The following are equivalent
	\begin{itemize}
		\item[(i)] $\mathcal{V}$ is distributive 
		\item[(ii)] $Z(N)=1$ for every $Q\in \mathcal{V}$ and every $N\in \N(Q)$.
	\end{itemize}
\end{proposition}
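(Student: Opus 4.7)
The plan is to reduce to Proposition \ref{distributive semimedial}, which for a semimedial variety $\mathcal{V}$ identifies distributivity with the condition that every solvable admissible subgroup $N \in \N(Q)$ (for every $Q \in \mathcal{V}$) is trivial. So under $2$-divisibility it is enough to show that ``every solvable $N \in \N(Q)$ is trivial'' is equivalent to (ii). The essential input is the closure property recorded immediately before the statement: in the $2$-divisible semimedial case, if $M \in \N(Q)$ then every characteristic subgroup of $M$ lies in $\N(Q)$ (it is automatically normal in $\lmlt(Q)$, and $\s$-invariant because $\s$ is bijective and $M$ is $\s$-invariant). In particular $Z(M)$ and every term $M^{(i)}$ of the derived series are admissible.

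For (i) $\Rightarrow$ (ii), I would fix $N \in \N(Q)$, note that $Z(N) \in \N(Q)$ and is abelian hence solvable, and apply Proposition \ref{distributive semimedial}(iii) to $Z(N)$ to conclude $Z(N)=1$.

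For (ii) $\Rightarrow$ (i), I would let $M \in \N(Q)$ be solvable; if $M \neq 1$, let $k \geq 1$ be its derived length, so that $D := M^{(k-1)}$ is a non-trivial abelian characteristic subgroup of $M$, hence an element of $\N(Q)$ by the observation above. But then $Z(D) = D \neq 1$, contradicting (ii); hence $M = 1$. Proposition \ref{distributive semimedial}(iii) now yields (i).

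The only delicate point, namely the stability of admissibility under passing to characteristic subgroups in the $2$-divisible setting, is exactly the fact established in the paragraph preceding the proposition; once it is accepted, the rest of the argument is a one-line derived-series chase combined with the observation that an abelian group equals its own centre.
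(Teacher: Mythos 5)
Your proposal is correct and follows essentially the same route as the paper: both directions reduce to Proposition \ref{distributive semimedial}(iii), using that $Z(N)$ is an abelian (hence solvable) admissible subgroup for (i) $\Rightarrow$ (ii), and a derived-series descent to a non-trivial abelian admissible subgroup with full centre for (ii) $\Rightarrow$ (i). The only cosmetic difference is that the paper iterates the closure of $\N(Q)$ under $N\mapsto[N,N]$ rather than invoking characteristic subgroups of the derived series, but the argument is the same.
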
 
\begin{proof}
	
	We are using the characterization of distributive varieties given in Proposition \ref{distributive semimedial}(iii).
	
	(i) $\Rightarrow$ (ii) If $N\in\N(Q)$, then $Z(N) \in \N(Q)$ is solvable and so $Z(N)=1$.
	
	(ii) $\Rightarrow$ (i) If $Z(N)=1$ for every $N\in \N(Q)$ then there are no abelian subgroups in $\N(Q)$. Since $[N,N]\in \N(Q)$ for every $N\in \N(Q)$ then there are no solvable subgroup in $\N(Q)$.
\end{proof}

\begin{corollary}\label{no medial in meet semi 1}
	Let $\mathcal{V}$ be a distributive variety of semimedial left quasigroup. Then:
	\begin{itemize}
		\item[(i)] $\mathcal{V}$ does not contain any non-trivial medial left quasigroup.
		\item[(ii)] $\mathcal{V}$ does not contain any non-trivial finite $2$-divisible latin left quasigroup.
	\end{itemize}
	In particular, there is no distributive variety of medial left quasigroups. 
\end{corollary}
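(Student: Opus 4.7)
The plan is to reduce each item to the characterizations of distributivity provided in Proposition~\ref{distributive semimedial}(iii) and Proposition~\ref{2 div and distrib}(ii): assuming a non-trivial $Q$ of the required type, I produce a non-trivial solvable (respectively abelian) admissible subgroup and derive a contradiction.

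For (i), assume $Q \in \mathcal{V}$ is a non-trivial medial left quasigroup. The classical computation with the medial law shows that $\dis(Q)$ is abelian: any two generators $L_a L_b^{-1}$ and $L_c L_d^{-1}$ commute, and commutation is preserved under conjugation by $\lmlt(Q)$. Since $\dis(Q) \in \N(Q)$ is abelian, hence solvable, Proposition~\ref{distributive semimedial}(iii) forces $\dis(Q) = 1$, which by the characterization recalled just before Proposition~\ref{pi_0} is equivalent to $1_Q \leq \lambda_Q$, so $Q$ is a permutation left quasigroup. But $\mathcal{V}$ is distributive, hence Mal'cev by combining Theorems~\ref{main th 2} and~\ref{main th}, and Mal'cev varieties of left quasigroups contain no non-trivial permutation left quasigroup (noted at the opening of Section~\ref{Sec:Mal'cev classes}). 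This contradicts the non-triviality of $Q$. The ``in particular'' clause follows immediately, since every member of a variety of medial left quasigroups is medial, hence trivial by (i).

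For (ii), let $Q \in \mathcal{V}$ be a non-trivial finite $2$-divisible latin semimedial left quasigroup. Since $\mathcal{V}$ is a Mal'cev variety of semimedial left quasigroups, it is Cayley by \cite[Proposition~3.6]{semimedial}; Proposition~\ref{for Cayley}(i) then gives superfaithfulness, so $\lambda_Q = 0_Q$ and $\dis(Q) \neq 1$. The task is to produce a non-trivial abelian $\s$-invariant normal subgroup $A$ of $\lmlt(Q)$ contained in $\dis(Q)$: such $A$ lies in $\N(Q)$ with $Z(A) = A \neq 1$, contradicting Proposition~\ref{2 div and distrib}(ii). A natural candidate is $Z(\dis(Q))$, which is characteristic in $\dis(Q)$, hence normal in $\lmlt(Q)$ and $\s$-invariant; it then suffices to show it is non-trivial.

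The main obstacle is the group-theoretic step in (ii). A general finite transitive permutation group need not have any non-trivial abelian normal subgroup, so one must use the specific interplay of latinness (bijectivity of the right translations), $2$-divisibility (the automorphism $\s$ acting on $\lmlt(Q)$) and semimediality (the relation $L_{x*y} = L_{\s(x)} L_y L_x^{-1}$) to show that $\dis(Q)$ is nilpotent, or at least admits a non-trivial $\s$-invariant abelian normal subgroup in $\lmlt(Q)$. Once this is established, Proposition~\ref{2 div and distrib}(ii) applied to that subgroup yields the desired contradiction.
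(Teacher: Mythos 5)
Your part (i) is correct and takes a different route from the paper. The paper disposes of both items at once by observing that a distributive (equivalently, meet-semidistributive) variety omits non-trivial solvable algebras --- by Theorem \ref{KK}(ii) a non-trivial solvable algebra has a non-trivial abelian congruence --- and then citing \cite[Corollary 4.4]{semimedial} for the nilpotence of medial left quasigroups. You instead argue at the level of the displacement group and invoke Proposition \ref{distributive semimedial}(iii). That works, and is arguably more self-contained, but the one claim you wave at (``the classical computation shows $\dis(Q)$ is abelian'') is usually stated for medial \emph{racks}, where $L_{x*y}=L_xL_yL_x^{-1}$ is also available; for a bare medial left quasigroup it needs a line of proof (from $L_{x*y}L_z=L_{x*z}L_y$ one gets $L_aL_b^{-1}=L_{x*b}^{-1}L_{x*a}$ for every $x$, from which the generators $L_aL_b^{-1}$ are seen to commute and to generate a subgroup that is already normal) or a citation. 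The rest of your chain --- solvable admissible subgroup forces $\dis(Q)=1$, hence $1_Q\leq\lambda_Q$, hence $Q$ is permutation, impossible in a Taylor variety --- is correctly sourced.

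Part (ii) has a genuine gap, which you acknowledge: the entire content of the statement is the production of a non-trivial solvable (or abelian) admissible subgroup, and you do not produce one. The missing ingredient is precisely what the paper cites, namely that finite $2$-divisible latin semimedial left quasigroups are solvable as algebras \cite[Corollary 3.20]{semimedial}; combined with the observation that the variety omits non-trivial solvable algebras (again Theorem \ref{KK}(ii)), this finishes the proof with no further group theory. This is not a routine step that can be deferred: it is a substantive structural theorem. Two further remarks on your proposed strategy. First, you appeal to Proposition \ref{2 div and distrib}(ii), but that proposition is stated for varieties of $2$-divisible semimedial left quasigroups, whereas here only the single algebra $Q$ is assumed $2$-divisible; Proposition \ref{distributive semimedial}(iii) is the correct tool and needs no such hypothesis. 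Second, your ``natural candidate'' $Z(\dis(Q))$ can be trivial even for a solvable transitive group (e.g. $S_3$ in its natural action), so even granting solvability of $\dis(Q)$ you should take the last non-trivial term of its derived series --- which is admissible because $[g,h]^{\s}=[g^{\s},h^{\s}]$ --- rather than the center.
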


\begin{proof}
	The variety $\mathcal{V}$ omits solvable algebras. Medial left quasigroups are nilpotent \cite[Corollary 4.4]{semimedial} and finite $2$-divisible latin semimedial left quasigroups are solvable \cite[Corollary 3.20]{semimedial}.
	%
\end{proof}
%
%
%

\subsection{Mal'cev varieties of quandles}
%
%

In this Section we focus on quandles. A remarkable construction of quandles is the following.

\begin{example}\cite{J}\label{coset quandle}
	Let $G$ be a group, $f\in  \aut{G}$ and $H \leq Fix(f)=\setof{a\in G}{f(a)=a}$. Let $G/H$ be the set of left cosets of $H$ and the multiplication defined by
	\begin{displaymath}
		aH\ast bH=af(a^{-1}b)H.
	\end{displaymath}
	Then $\Q(G,H,f)=(G/H,\ast,\backslash) $ is a quandle, called a \emph{coset} quandle. A coset quandle $\Q(G,H,f)$ is called \emph{principal} if $H=1$ and is such case it is denoted by $\Q(G,f)$. A principal quandle is called \emph{affine} if $G$ is abelian and in such case it is denoted by $\aff(G,f)$.
\end{example}

Connected quandles can be represented as coset quandles over their displacement group.
\begin{proposition}\cite[Theorem 4.1]{hsv}
	Let $Q$ be a connected quandle $Q$. Then $Q$ is isomorphic to $\mathcal{Q}(\dis(Q),\dis(Q)_a,\widehat{L_a})$ for every $a\in Q$ ($\widehat{L}_a$ is the inner automorphism of $L_a$ restricted and corestricted to $\dis(Q)$).
%
\end{proposition}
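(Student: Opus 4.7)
The plan is to exhibit the isomorphism $\varphi:\Q(\dis(Q),\dis(Q)_a,\widehat{L_a})\longrightarrow Q$ given by $\varphi(h\dis(Q)_a)=h(a)$ and check all the required properties. Three preliminary facts about connected quandles will do the work.

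First I would record that in a quandle every left translation $L_b$ is an automorphism (by left distributivity), so $\lmlt(Q),\dis(Q)\leq \aut{Q}$. Second, I would show that $\dis(Q)$ acts transitively on $Q$: for any $b\in Q$ we have $L_b=(L_bL_a^{-1})L_a$ with $L_bL_a^{-1}\in\dis(Q)$, so every $h\in\lmlt(Q)$ can be written as $dL_a^n$ for some $d\in\dis(Q)$, $n\in\mathbb{Z}$. Since $L_a(a)=a$ (idempotence), we get $h(a)=d(a)$, so the $\dis(Q)$-orbit of $a$ equals the $\lmlt(Q)$-orbit, which is $Q$ by connectedness. Third, I would verify $\dis(Q)_a\leq Fix(\widehat{L_a})$ by showing that every $h\in \dis(Q)_a$ commutes with $L_a$: for any $x\in Q$,
\begin{equation*}
hL_a(x)=h(a\ast x)=h(a)\ast h(x)=a\ast h(x)=L_ah(x),
\end{equation*}
using that $h$ is a quandle automorphism with $h(a)=a$. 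This makes the coset quandle $\Q(\dis(Q),\dis(Q)_a,\widehat{L_a})$ well-defined in the sense of Example \ref{coset quandle}.

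Now I would define $\varphi(h\dis(Q)_a)=h(a)$. Well-definedness and injectivity follow from the orbit--stabilizer principle: $hH=kH \Leftrightarrow h^{-1}k\in \dis(Q)_a \Leftrightarrow h(a)=k(a)$. Surjectivity is the transitivity established above. For the homomorphism property, using $L_a(a)=a$ (hence $L_a^{-1}(a)=a$) and the identity $L_{h(a)}=hL_ah^{-1}$ (valid for $h\in\aut{Q}$), I compute
\begin{align*}
\varphi(hH)\ast\varphi(kH)&=h(a)\ast k(a)=L_{h(a)}(k(a))=hL_ah^{-1}k(a),\\
\varphi(hH\ast kH)&=\varphi\bigl(h\,\widehat{L_a}(h^{-1}k)H\bigr)=hL_ah^{-1}kL_a^{-1}(a)=hL_ah^{-1}k(a),
\end{align*}
which agree. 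So $\varphi$ is a quandle isomorphism.

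The only step that is not purely formal is the verification that $\dis(Q)_a$ lies in the fixed points of $\widehat{L_a}$, where it is essential to use both quandle axioms simultaneously: idempotence gives $L_a(a)=a$ and left distributivity gives that elements of $\dis(Q)$ are automorphisms. Everything else is a standard orbit--stabilizer argument combined with the identity $L_{h(a)}=hL_ah^{-1}$ in $\aut{Q}$.
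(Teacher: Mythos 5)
The paper does not prove this statement at all: it is quoted verbatim from \cite[Theorem 4.1]{hsv}. Your argument is correct and is essentially the standard proof of that result --- transitivity of $\dis(Q)$ on a connected quandle, the inclusion $\dis(Q)_a\leq Fix(\widehat{L_a})$ needed for the coset quandle to be well defined, and the orbit--stabilizer verification that $h\dis(Q)_a\mapsto h(a)$ is a quandle isomorphism are exactly the ingredients used there, so nothing is missing.
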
 

The class of latin quandles is not a subvariety of the variety of quandles. Indeed the non-connected quandle $\aff(\mathbb{Z},-1)$ embeds into the latin quandle $\aff(\mathbb{Q},-1)$. On the other hand, the class of principal quandles of a Mal'cev variety is a subvariety.
\begin{theorem}\label{Malstev principal is a variety}
	The class of principal quandles of a Mal'cev variety $\mathcal{V}$ is a subvariety of $\mathcal{V}$.
\end{theorem}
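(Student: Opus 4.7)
The plan is to verify that the class $\mathcal{P}$ of principal quandles in $\mathcal{V}$ is closed under $\textbf{H}$, $\textbf{S}$, $\textbf{P}$. By Theorem \ref{main th} every quandle in $\mathcal{V}$ is superconnected and by Proposition \ref{for Cayley} superfaithful, so $Q \in \mathcal{V}$ is principal precisely when the action of $\dis(Q)$ on $Q$ is regular, i.e.\ $\dis(Q)_a = 1$. Closure under $\textbf{S}$ is immediate: if $S \leq Q \in \mathcal{P}$, then $S$ is connected and the freeness of $\dis(Q)$ on $Q$ forces the restriction $\langle L_a L_b^{-1} : a, b \in S \rangle \to \dis(S)$ to be an isomorphism whose image acts freely on $S$, so $\dis(S)$ acts regularly on $S$. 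For $\textbf{P}$, Lemma \ref{dis of HSP} embeds $\dis(\prod Q_i)$ into $\prod \dis(Q_i)$ compatibly with the componentwise action, which is free on each factor, and $\prod Q_i$ is connected by Theorem \ref{main th}; so the product is principal.

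For $\textbf{H}$ I would identify $Q \cong \Q(G,f)$ with $G = \dis(Q)$ and $f = \widehat{L}_e$, so $\dis(Q) \cong G$ acts on $Q \cong G$ by left multiplication. Given $\alpha \in \Con(Q)$ set $M = [e]_\alpha$. Since $\dis(Q)$ preserves $\alpha$ and acts regularly, the blocks of $\alpha$ are exactly the left cosets $aM$ and the stabilizer of the block $M$ inside $\dis(Q)$ coincides with $M$ itself under the identification; this makes $M$ a subgroup of $G$, which is $f$-invariant since $[e]_\alpha$ is a subquandle. Provided $M$ is moreover \emph{normal} in $G$, the map $\phi : Q \to \Q(G/M,\bar f)$, $a \mapsto aM$, is a surjective quandle homomorphism with kernel $\alpha$, exhibiting $Q/\alpha \cong \Q(G/M,\bar f)$ as a principal quandle.

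The normality of $M$ is the main obstacle and the place where the Mal'cev assumption enters decisively. Expanding the congruence condition $a_1 * b_1 \sim_M a_2 * b_2$ for $a_1 = a_2 m$, $b_1 = b_2 n$ with $m, n \in M$ inside $\Q(G,f)$ yields
\[
(a_2 * b_2)^{-1}(a_1 * b_1) \;=\; (y^{-1}x)\,m'\,(y^{-1}x)^{-1}\,f(n),
\]
where $x = f(a_2)$, $y = f(b_2)$ and $m' = mf(m)^{-1}$, so the condition reduces to $zm'z^{-1} \in M$ for every $z \in G$ and every $m' \in M' := \{mf(m)^{-1} : m \in M\}$; in other words, the normal closure of $M'$ in $G$ is contained in $M$. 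Now $\Q(M, f|_M) \leq \Q(G,f)$ belongs to $\mathcal{V}$ and is therefore connected by Theorem \ref{main th}, which forces its displacement group $\langle M' \rangle$ to equal $M$. Consequently the normal closure of $M'$ coincides with the normal closure of $M$ itself, and the containment collapses to $M \trianglelefteq G$. Without the Mal'cev hypothesis the subquandle $\Q(M, f|_M)$ might fail to be connected, $\langle M' \rangle$ could be a proper subgroup of $M$, and $M$ might remain non-normal, causing $Q/\alpha$ to fall outside $\mathcal{P}$.
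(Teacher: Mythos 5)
Your argument is correct, but it takes a genuinely different route from the paper: the paper's proof is three lines that outsource everything to external results, namely \cite[Corollary 2.3]{Principal} for closure under products and \cite[Proposition 2.11]{Super} for the statement that subquandles and factors of principal Mal'cev quandles are principal. What you have written is essentially a self-contained reconstruction of the content hidden behind those citations: in a Mal'cev variety ``principal'' becomes equivalent to regularity of the $\dis(Q)$-action (freeness plus the transitivity supplied by Theorem \ref{main th}), freeness passes to subalgebras and products, and the real work is the $\textbf{H}$-step. Your normality computation there is sound: with $Q\cong\Q(G,f)$, $G=\dis(Q)$, the block $M=[e]_\alpha$ is an $f$-invariant subgroup whose left cosets are the $\alpha$-blocks; the congruence property forces the normal closure of $M'=\{mf(m)^{-1}: m\in M\}$ into $M$; and connectedness of the subquandle $\Q(M,f|_M)$ --- the precise point where the Mal'cev hypothesis enters --- gives $\langle M'\rangle=M$, so that normal closure both contains and is contained in $M$, whence $M\trianglelefteq G$ and $Q/\alpha\cong\Q(G/M,\bar f)$. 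The trade-off is clear: the paper's proof is short but opaque, while yours is verifiable from within the paper and makes explicit that Mal'cev is needed exactly for the quotient step. One minor imprecision in your $\textbf{S}$-step: all that is needed is that every generator of $\dis(S)$ restricts from an element of $\dis(Q)$ preserving $S$, so freeness is inherited; your stronger claim that $\langle L_aL_b^{-1}: a,b\in S\rangle$ maps isomorphically onto $\dis(S)$ does hold, but only after absorbing the conjugating elements of $\lmlt(S)$ via the rack identity $L_cL_aL_c^{-1}=L_{c*a}$, which you should say explicitly.
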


\begin{proof}
	The product of principal quandles is principal \cite[Corollary 2.3]{Principal}. By virtue of \cite[Proposition 2.11]{Super} subquandles and factors of principal Mal'cev quandles are principal. Hence the class of principal quandles of $\mathcal{V}$ is a subvariety.
\end{proof}

The smallest examples of non-latin superconnected quandles are {\tt SmallQuandle}(28,i) for $i=3,4,5,6$ in the \cite{RIG} library of GAP. 
The identities in Table \ref{non trivial ex} provides Mal'cev varieties of quandles that contain such minimal examples.

\begin{table}[ht]	\caption{Examples of Mal'cev varieties of quandles}	 \label{non trivial ex}	
	\begin{tabular}{ | c | c |}
		\hline
		Identity & Witness in the RIG library \\
		\hline 
		\quad $ L_x L_y^2 L_x L_y L_x^2 L_y L_x L_y^2(x) \approx y$\quad & {\tt SmallQuandle}(28,3)\\
		\quad$L_x^2 L_y L_x L_y^2  L_x L_y L_x^2 L_y^2(x) \approx y$ \quad& {\tt SmallQuandle}(28,4)\\
		\quad$L_x L_y^2 L_x L_y L_x^2 L_y L_x L_y^2(x)\approx y$\quad& {\tt SmallQuandle}(28,5)\\
		\quad$ L_x L_y^2L_x L_y L_x^2 L_y L_x L_y^2(x)\approx y$\quad& {\tt SmallQuandle}(28,6) \\
		\hline
	\end{tabular}
\end{table}

Distributive varieties of quandles have the following characterization.

\begin{theorem}\label{abel iff finite}
	Let $\mathcal{V}$ be a variety of quandles. The following are equivalent:
	\begin{itemize}
		\item[(i)] $\mathcal{V}$ contains an abelian quandle.
		
		\item[(ii)] $\mathcal{V}$ has a finite model.
	\end{itemize}
	In particular, $\mathcal{V}$ is distributive if and only if $\mathcal{V}$ has no finite model.
\end{theorem}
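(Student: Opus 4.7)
The plan is to prove $(i)\Leftrightarrow(ii)$ and deduce the ``in particular'' clause from this equivalence together with Theorem~\ref{KK} and Theorem~\ref{main th 2}. For the reduction, I would note that $\mathcal{V}$ contains a non-trivial abelian quandle if and only if some member of $\mathcal{V}$ carries a non-trivial abelian congruence: one direction is immediate, while for the other any non-singleton block of an abelian congruence is an abelian subquandle. By Theorem~\ref{KK} this amounts to failure of congruence meet-semidistributivity, which by Theorem~\ref{main th 2} coincides with failure of congruence distributivity inside the setting of left quasigroups. Hence, granted $(i)\Leftrightarrow(ii)$, $\mathcal{V}$ is distributive iff it has no non-trivial finite model.

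For $(ii)\Rightarrow(i)$, I would take a finite non-trivial $Q\in\mathcal{V}$ and consider a subquotient $S=R/\beta$, with $R\leq Q$ and $\beta\in\Con(R)$, of minimum cardinality $|S|\geq 2$. By minimality, any proper non-trivial subquandle of $S$, or any proper non-trivial congruence of $S$, would pull back along $R\twoheadrightarrow S$ to a strictly smaller subquotient of $Q$ of cardinality $\geq 2$; so $S$ has neither, i.e.\ $S$ is \emph{strictly simple}. The classification of strictly simple quandles from \cite{Principal} asserts that every such quandle is abelian --- concretely, isomorphic to an affine quandle $\aff(\mathbb{F}_{p^d},c)$ for a prime $p$, some $d\geq 1$ and an appropriate $c\in\mathbb{F}_{p^d}$ (the case $\mathcal{P}_2$ appearing as $\aff(\mathbb{F}_2,1)$). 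Thus $S\in\mathcal{V}$ is a non-trivial abelian quandle.

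For $(i)\Rightarrow(ii)$, let $A\in\mathcal{V}$ be a non-trivial abelian quandle. If $A$ is a projection quandle, then $\mathcal{P}_2$ embeds as a subquandle of $A$ and we are done. Otherwise some $\lmlt(A)$-orbit $B$ has at least two elements; $B$ is a connected abelian subquandle of $A$, so by the affine representation of connected abelian quandles from \cite{Principal}, $B\cong\aff(G,f)$ for an abelian group $G$ and $f\in\aut{G}$ with $1-f$ surjective. Picking $a\neq b$ in $B$ and passing to $C=\langle a,b\rangle$, a computation with the affine operation identifies $C\cong\aff(H,f|_H)$, where $H$ is the cyclic $\mathbb{Z}[f,f^{-1}]$-submodule of $G$ generated by $b-a$. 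Since $\mathbb{Z}[f,f^{-1}]$ is a finitely generated $\mathbb{Z}$-algebra, every non-zero cyclic module over it admits a maximal proper submodule with finite-field residue (Nullstellensatz for finitely generated $\mathbb{Z}$-algebras), producing a finite non-trivial quandle quotient $\aff(\mathbb{F}_{p^d},\bar{c})$ of $C$; this is a finite non-trivial member of $\mathcal{V}$.

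The chief difficulty is the direction $(ii)\Rightarrow(i)$: its engine is the classification of strictly simple quandles from \cite{Principal}, which is a substantial external input. Once that is available, extracting a strictly simple subquotient of minimum cardinality is a routine minimality argument. In the direction $(i)\Rightarrow(ii)$, the main technical point is passing from an abstract non-trivial abelian quandle to a finite one, which is accomplished by combining the affine representation of (connected components of) abelian quandles with the finiteness of residue fields of maximal ideals in finitely generated $\mathbb{Z}$-algebras.
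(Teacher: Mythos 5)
Your overall architecture agrees with the paper's: the ``in particular'' clause is obtained exactly as you describe, from Theorem \ref{KK}, Theorem \ref{main th 2} and the observation that in an idempotent variety a non-trivial abelian congruence yields a non-trivial abelian subalgebra. Your (ii) $\Rightarrow$ (i) is also essentially the paper's argument: the paper simply cites \cite[Theorem 4.7]{Principal} (the minimal subquandles of a finite quandle are abelian), which is the same external input as your strictly-simple classification; and since blocks of congruences of idempotent algebras are subalgebras, a minimal non-singleton subquandle is already strictly simple, so your detour through subquotients is unnecessary (though harmless).

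The direction (i) $\Rightarrow$ (ii) is where you genuinely diverge, and where there is a gap. The paper argues in two lines: by \cite{Magari} the variety $\mathcal{V}(Q)$ generated by an abelian quandle $Q$ contains a simple (abelian) quandle, and by \cite[Theorem 3.21]{Principal} simple abelian quandles are finite. Your hands-on substitute rests on the claim that an $\lmlt(A)$-orbit $B$ with at least two elements is a \emph{connected} abelian subquandle, to which the affine representation of connected abelian quandles can be applied. Orbits are indeed subquandles, but they need not be connected as quandles in their own right: for $A=\aff(\mathbb{Z},-1)$ the orbit $2\mathbb{Z}$ is isomorphic to $\aff(\mathbb{Z},-1)$ again, hence disconnected. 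So the representation $B\cong\aff(G,f)$ cannot be invoked as stated. A second soft spot is the asserted isomorphism $\langle a,b\rangle\cong\aff(H,f|_H)$ with $H$ the cyclic $\mathbb{Z}[f,f^{-1}]$-submodule generated by $b-a$: this ``computation'' is precisely what guarantees that $\aff(H,f|_H)$, and hence its finite quotient over a residue field, lies in $\mathcal{V}$, and it is not carried out (a priori the generated subquandle could be a proper subset of $H$). Both points may be repairable via the structure theory of abelian quandles, but as written the argument does not close; the Magari route avoids all of it.
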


\begin{proof}
	(i) $\Rightarrow$ (ii) According to \cite[Theorem 3.21]{Principal} simple abelian quandles are finite. If $Q\in\mathcal{V}$ is an abelian quandle. According to the main result of \cite{Magari}, $\mathcal{V}(Q)\subseteq \mathcal{V}$ contains a simple abelian quandle which is finite.
	
	(ii) $\Rightarrow$ (i) Let assume that $\mathcal{V}$ contains a finite quandle $Q$. According to \cite[Theorem 4.7]{Principal}, the minimal subquandles of $Q$ with respect to inclusion are abelian. 
	
	The variety $\mathcal{V}$ is idempotent, and so it contains an abelian congruence if and only if it contains an abelian algebra. Thus, the last claim follows.
\end{proof}
%
%
%
%
%
%
%
%

\begin{corollary}\label{no medial in meet semi}
	Let $\mathcal{V}$ be a distributive variety of semimedial left quasigroup and $Q\in \mathcal{V}$. If $E(Q)$ is finite then $|E(Q)|=1$. 
\end{corollary}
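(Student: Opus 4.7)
The plan is to reduce the statement to Theorem~\ref{abel iff finite} by restricting attention to the subquandle of idempotents $E(Q)$.

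First I would verify that $E(Q)$ is a subquandle of $Q$ lying in $\mathcal{V}$. Since $Q$ is semimedial, the squaring map $\s$ is an endomorphism, as recalled at the beginning of Section~\ref{Sec:semimedial}, and $E(Q)=\mathrm{Fix}(\s)$. Hence for $a,b\in E(Q)$ one has $\s(a\ast b)=\s(a)\ast\s(b)=a\ast b$ and $\s(a\backslash b)=\s(a)\backslash\s(b)=a\backslash b$, so $E(Q)$ is closed under both $\ast$ and $\backslash$. Being idempotent by definition and inheriting the semimedial law from $Q$, the subalgebra $E(Q)$ is an idempotent semimedial left quasigroup, that is, a quandle. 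In particular $E(Q)\in\textbf{S}(Q)\subseteq\mathcal{V}$.

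Next I would pass to the subvariety $\mathcal{W}=\mathcal{V}(E(Q))\subseteq\mathcal{V}$. Since $E(Q)$ satisfies the idempotent and semimedial identities, so does every algebra in $\mathcal{W}$; therefore $\mathcal{W}$ is a variety of quandles. As a subvariety of the distributive variety $\mathcal{V}$, it is itself distributive.

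Finally I would invoke Theorem~\ref{abel iff finite}: a distributive variety of quandles has no non-trivial finite model. But $E(Q)$ is a finite quandle lying in $\mathcal{W}$, so $E(Q)$ must be trivial, giving $|E(Q)|\leq 1$ and thus $|E(Q)|=1$ whenever $E(Q)$ is non-empty. The only non-routine step is the first one: observing that semimediality promotes $E(Q)$ from a mere subset of fixed points of $\s$ to a genuine subalgebra, which is exactly what allows the subvariety it generates to be a variety of quandles to which Theorem~\ref{abel iff finite} applies. Everything else is a direct application of the machinery already developed in the paper.
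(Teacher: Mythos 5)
Your proof is correct and takes essentially the same route as the paper's: observe that $E(Q)$ is a finite subquandle (because the squaring map $\s$ is an endomorphism of a semimedial left quasigroup) and apply Theorem~\ref{abel iff finite} to the distributive variety of quandles $\mathcal{V}(E(Q))$. The paper's proof is a one-line version of exactly this argument, so no further comparison is needed.
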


\begin{proof}
	According to Theorem \ref{abel iff finite} if $E(Q)$ is finite then $\mathcal{V}(E(Q))$ contains an abelian algebra.
\end{proof}

{\it Involutory} quandles are the quandles that satisfy the identity $x(xy)\approx y$. A direct conseguence of the contents of \cite[Section 3]{Super} is that connected involutory quandles on two generators are finite, so we have the following Corollary of Theorem \ref{abel iff finite}.

\begin{corollary}\label{no involutory distributive}
	There is no distributive variety of involutory quandles.
\end{corollary}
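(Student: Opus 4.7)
The plan is to argue by contradiction: assume that $\mathcal{V}$ is a non-trivial distributive variety of involutory quandles, and construct a non-trivial finite member of $\mathcal{V}$, contradicting Theorem \ref{abel iff finite}.

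First I would feed $\mathcal{V}$ through the Mal'cev machinery developed in Section \ref{Sec:Mal'cev classes}. Theorem \ref{main th 2} upgrades congruence distributivity of $\mathcal{V}$ to congruence arithmeticity, so $\mathcal{V}$ has a Mal'cev term. Theorem \ref{main th} then applies and forces every quandle in $\mathcal{V}$ to be superconnected; in particular, every finitely generated subquandle of any member of $\mathcal{V}$ is connected.

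Next I would take any non-trivial $Q \in \mathcal{V}$, pick distinct $a,b \in Q$, and form the subquandle $S = \langle a,b \rangle \le Q$. Since $\mathcal{V}$ is closed under subalgebras, $S \in \mathcal{V}$, and by the previous step $S$ is connected. Because the identity $x*(x*y)\approx y$ is inherited by $S$, it is a connected involutory quandle on two generators, so the finiteness statement cited in the sentence preceding the corollary (a direct consequence of \cite[Section 3]{Super}) gives that $S$ is finite. Thus $\mathcal{V}$ has a non-trivial finite model, contradicting the ``distributive $\iff$ no finite model'' direction of Theorem \ref{abel iff finite}.

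The main — and essentially only — obstacle is the cited finiteness of two-generated connected involutory quandles from \cite{Super}; once that is granted, the argument reduces to a short chain of implications assembled from the theorems already proved in the paper.
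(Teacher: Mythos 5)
Your proposal is correct and follows essentially the same route as the paper: the paper's (implicit) proof is precisely the combination of Theorem \ref{main th 2} and Theorem \ref{main th} to get superconnectedness, the finiteness of two-generated connected involutory quandles from \cite{Super}, and the contradiction with Theorem \ref{abel iff finite}. You have merely spelled out the chain of implications that the paper leaves to the reader.
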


\bibliographystyle{plain}
\bibliography{references}

\end{document}